\newtheorem{thm}{Theorem}[section]
\newtheorem{lem}[thm]{Lemma}
\newtheorem{cor}[thm]{Corollary}
\newtheorem{prop}[thm]{Proposition}
\newtheorem{ex}[thm]{Example}
\newtheorem*{prob*}{Open problem}
\theoremstyle{definition}
\newtheorem{defi}[thm]{Definition}
\theoremstyle{remark}
\newtheorem{rem}[thm]{Remark}
\newtheorem*{rem*}{Remark}
\DeclareMathOperator{\id}{id}
\DeclareMathOperator{\Hom}{Hom}
\newcommand{\kringel}{\mathbin{\raise0.5pt\hbox{$\scriptstyle\circ$}}}
\newcommand{\pkt}{\mathbin{\raise0.5pt\hbox{$\scriptstyle\bullet$}}}
\newcommand{\sq}{\mathbin{\raise0.5pt\hbox{$\scriptscriptstyle\square$}}}
\newcommand{\C}{\mathbb{C}}
\newcommand{\tr}{\mathop{\rm tr}}
\newcommand{\ad}{{\rm ad}}
\newcommand{\Der}{{\rm Der}}
\newcommand{\Lg}{\mathfrak{g}}
\newcommand{\Lh}{\mathfrak{h}}    \newcommand{\Lk}{\mathfrak{k}}
\newcommand{\Ll}{\mathfrak{l}}
\newcommand{\Lr}{\mathfrak{r}}
\newcommand{\Ls}{\mathfrak{s}}
\newcommand{\Lz}{\mathfrak{z}}
\renewcommand{\phi}{\varphi}
\begin{document}

\title{Deformations of semi-direct products}

\author[M. A. Alvarez]{Mar\'ia Alejandra Alvarez}
\author[S. Rivi\`ere]{Salim Rivi\`ere}
\author[N. Rojas]{Nadina Rojas}
\author[S. Vera]{Sonia Vera}
\author[F. Wagemann]{Friedrich Wagemann}

\date{\today}

\subjclass[2000]{Primary 17A32, Secondary 17B56}
\keywords{Lie algebra cohomology}

\begin{abstract}
We exhibit in this article a contraction of the direct product Lie algebra $\Lg\oplus\Lg$ of a finite-dimensional complex Lie algebra $\Lg$ onto the semi-direct product Lie algebra $\Lg\rtimes\Lg$, where the first factor $\Lg$ is viewed as a trivial Lie algebra and as the adjoint $\Lg$-module. This contraction gives rise to a non-zero cohomology class in the second cohomology space. We generalize to the setting of $\Lh\oplus\Lg$ and $\Lh\rtimes\Lg$ with respect to a given crossed module of Lie algebras $\Lh\to\Lg$. We give many examples to illustrate our results.
\end{abstract}

\maketitle

\section*{Introduction}

In this article, we investigate deformations and cohomology of semi-direct product Lie algebras, i.e. of Lie algebras of
the form $V\rtimes\Lg$ where $\Lg$ is a Lie algebra and $V$ an $\Lg$-module. All Lie algebras are supposed to be finite-dimensional and complex. We will study at first in more detail the special case where
$V=\Lg$ is the Lie algebra, viewed as an $\Lg$-module via the adjoint action (and as an abelian Lie algebra).
Later on, we will also consider the more general setting $\Lh\rtimes\Lg$, when there is given a crossed module $\mu:\Lh\to\Lg$. In the corresponding semi-direct product, the Lie algebra $\Lh$ will be viewed as a $\Lg$-module and abelian Lie algebra though.

Motivation for our work comes from the cohomology computations of Richardson \cite{Ri} and Rauch \cite{Ra}, work which was then further developed by Burde-Wagemann \cite{BW}.
Namely, first Richardson and then Rauch in a more systematic way computed the cohomology of the semi-direct products of the form $V\rtimes\Ls\Ll_2(\C)$ of $\Ls\Ll_2(\C)$  with an irreducible $\Ls\Ll_2(\C)$-module $V$.
They found in particular a $1$-dimensional $H^2(V\rtimes\Ls\Ll_2(\C),V\rtimes\Ls\Ll_2(\C))$ for certain odd-dimensional $V$, including the adjoint module $V=\Ls\Ll_2(\C)$ of dimension $3$. Starting from these results, we show more generally that the semi-direct product $\Lg\rtimes\Lg$ of a finite-dimensional complex Lie algebra $\Lg$ with itself, viewed as a module with respect to the adjoint representation (and as an abelian Lie algebra),
deforms to the direct product Lie algebra $\Lg\oplus\Lg$, see  Proposition \ref{prop_contraction}.
This semi-direct product $\Lg\rtimes\Lg$ arises naturally as the Lie algebra of the Lie group $TG=\Lg\rtimes G$, where $G$ is the connected, simply connected Lie group integrating $\Lg$.

Let us give an overview over the content of this article. The deformation of Proposition \ref{prop_contraction} is given by a non-trivial element in $H^2(\Lg\rtimes\Lg,\Lg\rtimes\Lg)$, see Proposition \ref{prop_non_trivial_cocycle}. In Section 3, we study the possible isomorphy of the direct product $\Lg\oplus\Lg$ and the semi-direct product $\Lg\rtimes\Lg$. We use the derivation Lie algebras of $\Lg\oplus\Lg$ and $\Lg\rtimes\Lg$ in order to conclude that these are isomorphic only for the abelian Lie algebra $\Lg$, see Corollary \ref{cor_non_isomorphy}. Along the way, we compute for example the first cohomology space $H^1$ with adjoint coefficients of the semi-direct product, see Theorem \ref{thm_cohomology_semidirect_prod}.

In Section 3, we ask the question how our constructions from Sections 1-2 generalize to an arbitrary semi-direct product $\Lh\rtimes\Lg$ for a Lie algebra $\Lg$ acting by derivations on some Lie algebra $\Lh$. It turns out that in case there exists a crossed module $\mu:\Lh\to\Lg$, many results from Sections 1-2 generalize: The cochain defining the deformation is still a $2$-cocycle (Proposition \ref{prop_cocycle_crmod}) and the cocycle is still non-trivial (Theorem \ref{thm_non_trivial_cocycle_crmod}) in case the crossed module $\mu:\Lh\to\Lg$ satisfies $\mu([\Lh,\Lh])\not=0$. We investigate functoriality of the $2$-cocycle and show some universality property of the cocycle on $\Lg\rtimes\Lg$, see Theorem \ref{thm_universality}.

Section 4 is devoted to the comparison of the three Lie algebras
at stake: the direct product Lie algebra $\Lh\oplus\Lg$, the semi-direct product $\Lh\rtimes\Lg$ (where $\Lh$ is considered as an abelian Lie algebra) and the deformed Lie algebra $\Lh\times^\mu_t\Lg$. We give examples
where these three Lie algebras are isomorphic or not. For this, we determine the center and the derived ideal of these Lie algebras. One general criterion is here that in case the center of $\Der(\Lg)$ is non-trivial, the deformed algebra cannot be isomorphic to the direct product, see Corollary \ref{cor_criterion_center}. We show in Theorem \ref{Teo 2 step} that for a $2$-step nilpotent Lie algebra $\Lg$ and the crossed module $\ad:\Lg\to\Der(\Lg)$, the deformed Lie algebra is not isomorphic to the direct product. Using the derived ideal, we show that the deformed Lie algebra $\Lh\times^\mu_t\Lg$ is solvable if and only if $\Lg$ is solvable, see Corollary \ref{cor_criterion_derived_ideal}. In Section 4.4, we try to obtain the derivation algebras of the semi-direct product and the deformed Lie algebra, but we do not get an answer which is as nice as in the case of $\Lg\rtimes\Lg$.

\vspace{.5cm}

\noindent{\bf Acknowledgements:} This collaboration is funded by the MATH-AmSud Program, Project VARLIE codes 23-MATH-16 and AMSUD230039. We are thankful for this support which financed a stay of FW and MAA in Córdoba in August 2024 where a large part of this work was done.


\section{Cocycle and deformation}

Let $\Lg$ be a complex finite-dimensional non-abelian Lie algebra. We consider the semidirect product $\Lg\rtimes\Lg$ of $\Lg$ with itself viewed as a module with respect to the adjoint representation (and as an abelian Lie
algebra), i.e. with product defined by
\[[(m,g),(m',g')]:=([g,m']-[g',m],[g,g']).\]

Observe that the bracket of $\Lg$ can serve as a $2$-cochain for the semi-direct product $\Lg\rtimes\Lg$ with values in the adjoint representation in the following way:
$$c((m,g),(m',g'))=(0,[m,m']).$$


Then we obtain:

\begin{lem}
$c$ is a $2$-cocycle on the semi-direct product $\Lg\rtimes\Lg$ with adjoint values.
\end{lem}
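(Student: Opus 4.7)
The plan is to apply the Chevalley--Eilenberg coboundary formula directly to $c$, viewed as a $2$-cochain on $\Lg\rtimes\Lg$ with values in the adjoint module. Writing $x=(m,g)$, $y=(m',g')$, $z=(m'',g'')$, this formula reads
\[
dc(x,y,z)=x\cdot c(y,z)-y\cdot c(x,z)+z\cdot c(x,y)-c([x,y],z)+c([x,z],y)-c([y,z],x),
\]
where the dot denotes the adjoint action $(m,g)\cdot(n,h)=([g,n]-[h,m],[g,h])$.

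First I would compute the three ``action'' terms. Since $c(y,z)=(0,[m',m''])$, one finds $x\cdot c(y,z)=([m,[m',m'']],[g,[m',m'']])$, and cyclically for the other two. Next I would compute the three bracket terms, using the semi-direct product bracket $[x,y]=([g,m']-[g',m],[g,g'])$; this gives, for example, $c([x,y],z)=(0,[[g,m']-[g',m],m''])$, and similarly for the other pairs. Collecting everything, $dc(x,y,z)$ splits into a first component living in the abelian copy of $\Lg$ and a second component in the acting copy.

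The first component is $[m,[m',m'']]-[m',[m,m'']]+[m''[m,m']]$, which is exactly the Jacobi identity in $\Lg$ on $m,m',m''$, hence vanishes. For the second component, I expect the argument to be a regrouping of nine terms into three Jacobi triples: one triple on $(g,m',m'')$, one on $(g',m,m'')$, and one on $(g'',m,m')$. More precisely, I would gather the terms $[g,[m',m'']]-[[g,m'],m'']+[[g,m''],m']$ (which vanishes by Jacobi on $g,m',m''$), the terms $-[g',[m,m'']]+[[g',m],m'']-[[g',m''],m]$ (Jacobi on $g',m,m''$), and the terms $[g'',[m,m']]-[[g'',m],m']+[[g'',m'],m]$ (Jacobi on $g'',m,m'$). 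Each of these three groups vanishes independently, so $dc(x,y,z)=0$.

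The only real obstacle is the bookkeeping: nine terms must be sorted correctly into three Jacobi identities, with the right signs. The signs work out because, in each group, two of the three bracket terms from $c([\cdot,\cdot],\cdot)$ combine with one of the action terms from the first half of the formula, and the antisymmetry of $[\cdot,\cdot]$ flips signs in the expected way. No deeper structural input beyond Jacobi on $\Lg$ is needed, so the cocycle identity is in the end a direct consequence of the Jacobi identity in $\Lg$ alone.
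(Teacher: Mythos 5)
Your proof is correct and follows essentially the same route as the paper: a direct expansion of the Chevalley--Eilenberg coboundary, with the first component vanishing by the Jacobi identity on $m,m',m''$ and the second component sorted into three Jacobi triples of the form $[g,[m',m'']]-[[g,m'],m'']+[[g,m''],m']$. The grouping and signs you describe check out, so nothing is missing.
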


\begin{proof}
On the one hand
$$c([(m,g),(m',g')],(m'',g''))=(0,[[g,m'],m'']-[[g',m],m'']).$$
On the other hand,
$$[(m,g),c((m',g'),(m'',g''))]=[(m,g),(0,[m',m''])]=([[m',m''],m],[g,[m',m'']]).$$
By writing out the cyclic permutations of these terms, one sees that $c$ satisfies the cocycle identity in the first component due to the Jacobi identity for $[[m',m''],m]$, and in the second component due to the Jacobi identity
for terms of the form $[g,[m',m'']]$.
\end{proof}

The next proposition is rather surprising.

\begin{prop}  \label{prop_non_trivial_cocycle}
The cocycle $c$ cannot be cohomologous to zero.
\end{prop}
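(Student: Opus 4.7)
The plan is to argue by contradiction. Suppose $c = d\alpha$ for some linear map $\alpha \colon \Lg\rtimes\Lg \to \Lg\rtimes\Lg$; I will restrict the coboundary equation to the abelian subalgebra $V := \Lg \times \{0\} \subset \Lg\rtimes\Lg$ and derive that $[\Lg,\Lg] = 0$, contradicting the standing hypothesis that $\Lg$ is non-abelian.

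Write $\alpha(m,g) = (\alpha_1(m,g),\alpha_2(m,g))$. A direct computation of the bracket gives $[(m,0),(a,b)] = ([m,b],0)$ in $\Lg\rtimes\Lg$, whose second component is always zero, and $V$ is abelian since $[(m,0),(m',0)] = (0,0)$. Combining these facts, for all $m,m' \in \Lg$ the coboundary restricts to
$$d\alpha((m,0),(m',0)) = [(m,0),\alpha(m',0)] - [(m',0),\alpha(m,0)] = \bigl([m,\alpha_2(m',0)] - [m',\alpha_2(m,0)],\, 0\bigr),$$
so the second component of $d\alpha|_{V\times V}$ vanishes identically, regardless of the choice of $\alpha$.

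On the other hand, by definition the restriction of $c$ to $V\times V$ equals $(0,[m,m'])$, whose second component is exactly $[m,m']$. Comparing second components in the hypothetical equality $c = d\alpha$ would force $[m,m'] = 0$ for every $m,m' \in \Lg$, contradicting the hypothesis that $\Lg$ is non-abelian; hence no such $\alpha$ exists.

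The whole argument hinges on the presence of the abelian ideal $V$, on which $c$ produces output in the transverse direction $\{0\}\times\Lg$---a direction that no coboundary restricted to $V\times V$ can reach. There is therefore no real obstacle; the surprise advertised in the text lies in the fact that one might naively hope to kill $c$ by some $\alpha$ that cleverly mixes the two copies of $\Lg$, but the abelian-subalgebra restriction immediately forbids any such attempt.
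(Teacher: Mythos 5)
Your argument is correct and is essentially the paper's own proof: both restrict the coboundary identity to pairs $((m,0),(m',0))$ (equivalently, set $g=g'=0$) and observe that the second component of any coboundary vanishes there, while the second component of $c$ is $[m,m']$, which is non-zero since $\Lg$ is non-abelian. The only difference is presentational (you phrase the restriction via the abelian subalgebra $\Lg\times\{0\}$ and write the adjoint action as a bracket), so there is nothing substantive to add.
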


\begin{proof}
Indeed, a coboundary is of the form
$$d\alpha((m,g),(m',g'))=[(m,g),\alpha(m',g')]-[(m',g'),\alpha(m,g)]-\alpha([(m,g),(m',g')]).$$
Denoting $\alpha:\Lg\rtimes\Lg\to\Lg\rtimes\Lg$ by $\alpha(m,g)=(\alpha_1(m,g),\alpha_2(m,g))$, we get
\begin{eqnarray*}
d\alpha((m,g),(m',g'))&=&(g\cdot\alpha_1(m',g')-\alpha_2(m',g')\cdot m,[g,\alpha_2(m',g')])\\
&-&(g'\cdot\alpha_1(m,g)-\alpha_2(m,g)\cdot m',[g',\alpha_2(m,g)])-\alpha(g\cdot m'-g'\cdot m,[g,g']).
\end{eqnarray*}
Now put $g=g'=0$. Then we obtain
$$d\alpha((m,0),(m',0))=(-\alpha_2(m',0)\cdot m - \alpha_2(m,0)\cdot m',0).$$
Therefore we see that we can never obtain $c((m,0),(m',0))=(0,[m,m'])$ as a coboundary.
\end{proof}

This gives us the following result.

\begin{cor}
For any complex finite-dimensional non-abelian Lie algebra $\Lg$, we have
$$H^2(\Lg\rtimes\Lg,\Lg\rtimes\Lg)\not= 0.$$
\end{cor}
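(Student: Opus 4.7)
The plan is essentially to read off the corollary from the two preceding statements. By the lemma, the cochain $c$ defined by $c((m,g),(m',g'))=(0,[m,m'])$ is a $2$-cocycle on the semi-direct product $\Lg\rtimes\Lg$ with values in the adjoint representation. By the proposition immediately above, $c$ is not cohomologous to zero. Hence $[c]$ is a non-zero class in $H^2(\Lg\rtimes\Lg,\Lg\rtimes\Lg)$.

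The only point that needs a brief check is that $c$ itself is a non-zero cochain, so that the statement "$c$ is not a coboundary" is not the vacuous assertion that $0$ is not a coboundary. This is exactly where the hypothesis that $\Lg$ is non-abelian enters: since $[\Lg,\Lg]\neq 0$, we can pick $m,m'\in\Lg$ with $[m,m']\neq 0$, and then $c((m,0),(m',0))=(0,[m,m'])\neq 0$ in $\Lg\rtimes\Lg$. Alternatively, the proof of the preceding proposition already exhibits such a pair $(m,0),(m',0)$ to show that $c$ cannot be written as $d\alpha$; the same pair shows $c\not\equiv 0$.

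Combining these observations yields $H^2(\Lg\rtimes\Lg,\Lg\rtimes\Lg)\neq 0$, as claimed. There is no real obstacle here: the work has all been done in the lemma (Jacobi identity calculation) and in the proposition (the trick of setting $g=g'=0$ to see that no coboundary can have a non-zero second component on the subalgebra $\Lg\oplus 0$). The corollary is simply the conceptual packaging of these two facts.
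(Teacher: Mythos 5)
Your argument is exactly how the paper obtains this corollary: it follows immediately from the lemma (that $c$ is a $2$-cocycle) together with Proposition \ref{prop_non_trivial_cocycle} (that $c$ is not a coboundary), the non-abelian hypothesis guaranteeing, as you note, that $c$ is a genuinely non-zero cochain so the class $[c]$ is non-zero. Your brief check of this last point is a reasonable addition, since the paper leaves it implicit in its standing assumption that $\Lg$ is non-abelian.
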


\subsection{Deformation and contractions}\label{ss:def_cont}

The linear deformation related to the cocycle $c$ is given by:
\begin{equation}   \label{contraction}
[(m,g),(m',g')]_t:=(g\cdot m'-g'\cdot m,[g,g']+t[m,m']).
\end{equation}

Consider now the linear map 
$\psi_t:\Lg\oplus\Lg\to\Lg\oplus\Lg$ defined by
$$\psi_t(m,g):=(g-\sqrt{t}\,\,m,g+\sqrt{t}\,\,m),$$
for all $m,g\in\Lg$
.
In matrix form, $\psi_t$ reads
$$\psi_t=\left(\begin{array}{cc} -\sqrt{t} & 1 \\ \sqrt{t} & 1 \end{array}\right). $$
It is thus invertible for $t\in\C\setminus\{0\}$ and its inverse is
$$\psi_t^{-1}=\frac{1}{-2\sqrt{t}}\left(\begin{array}{cc} 1 & -1 \\ -\sqrt{t} & -\sqrt{t} \end{array}\right).$$

Let $[\cdot,\cdot]_\oplus$ be the direct product Lie bracket. A direct computation gives then
$$\psi_t^{-1}([\psi_t(m,g),\psi_t(m',g')]_{\oplus})=(g\cdot m'-g'\cdot m,[g,g']+t[m,m'])=[(m,g),(m',g')]_t,$$
where it is clear from equation \eqref{contraction} that the limit for $t\to 0$ gives the semi-direct product bracket. On the other hand, for all $t\not=0$, the Lie algebra $(\Lg\oplus\Lg,[\cdot,\cdot]_t)$, given by the linear deformation associated with the cocycle $c$, is isomorphic to the direct product Lie algebra $(\Lg\oplus\Lg,[\cdot,\cdot]_\oplus)$ via the isomorphism $\psi_t$. We therefore get a contraction of the direct product Lie algebra $\Lg\oplus\Lg$
onto the semi-direct product Lie algebra $\Lg\rtimes\Lg$. 


\begin{prop}  \label{prop_contraction}
For any (finite-dimensional complex) Lie algebra $\Lg$, the direct product Lie algebra $\Lg\oplus\Lg$ contracts onto the semi-direct product Lie algebra $\Lg\rtimes\Lg$.
\end{prop}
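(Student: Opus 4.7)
The plan is to produce an Inönü-Wigner type contraction, that is, a one-parameter family of Lie brackets $[\cdot,\cdot]_t$ on the fixed vector space $\Lg\oplus\Lg$ so that (i) each $(\Lg\oplus\Lg,[\cdot,\cdot]_t)$ for $t\neq 0$ is isomorphic to the direct product Lie algebra, and (ii) the pointwise limit as $t\to 0$ is the semi-direct product bracket. The natural candidate is the linear deformation in the direction of the $2$-cocycle $c$ from the previous lemma, namely
\[[(m,g),(m',g')]_t := (g\cdot m'-g'\cdot m,\,[g,g']+t[m,m']).\]
The first step is to check that this is a Lie bracket for every $t\in\C$; since $c$ is a $2$-cocycle and enters only in the abelian slot, the Jacobi identity for $[\cdot,\cdot]_t$ decomposes as a sum of terms bilinear in $t$, each of which vanishes by the cocycle identity and the Jacobi identity in $\Lg$. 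At $t=0$ we visibly recover $[\cdot,\cdot]_\rtimes$, taking care of (ii).

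For (i), I would look for a change of coordinates diagonalising the quadratic form $[g,g']+t[m,m']$ appearing in the second slot as a direct sum of two independent copies of $\Lg$. Trying new coordinates of the form $g\pm\sqrt{t}\,m$ causes the cross terms to cancel by antisymmetry, which motivates introducing
\[\psi_t(m,g) := (g-\sqrt{t}\,m,\,g+\sqrt{t}\,m),\]
invertible precisely for $t\neq 0$. A direct computation then verifies $\psi_t^{-1}\bigl([\psi_t(m,g),\psi_t(m',g')]_\oplus\bigr)=[(m,g),(m',g')]_t$, establishing that $\psi_t$ is a Lie algebra isomorphism from $(\Lg\oplus\Lg,[\cdot,\cdot]_t)$ to $(\Lg\oplus\Lg,[\cdot,\cdot]_\oplus)$ for every nonzero $t$. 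Combined with (ii), this is exactly the contraction statement.

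The main (in fact, only) subtle point is the scaling: the correct parameter inside $\psi_t$ is $\sqrt{t}$ rather than $t$, and consequently $\psi_t^{-1}$ necessarily blows up as $t\to 0$. This is expected and unavoidable, since an honest contraction onto a non-isomorphic limit can never be realized by a family of isomorphisms extending continuously to $t=0$; the earlier non-triviality result for $c$ guarantees that the deformation is not gauge-equivalent to the trivial one, so such degeneration must occur. Everything beyond the correct guess for $\psi_t$ is then a routine bilinear computation.
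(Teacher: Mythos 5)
Your proposal is correct and takes essentially the same route as the paper: the identical linear deformation $[(m,g),(m',g')]_t=(g\cdot m'-g'\cdot m,[g,g']+t[m,m'])$, the identical map $\psi_t(m,g)=(g-\sqrt{t}\,m,\,g+\sqrt{t}\,m)$ with the verification $\psi_t^{-1}([\psi_t(m,g),\psi_t(m',g')]_\oplus)=[(m,g),(m',g')]_t$ for $t\neq 0$, and the limit $t\to 0$ recovering the semi-direct product bracket. The only cosmetic difference is that you check the Jacobi identity for $[\cdot,\cdot]_t$ separately via the cocycle property (where, to be fully precise, one should also observe that the quadratic term $c(c(\cdot,\cdot),\cdot)$ vanishes because $c$ has zero first component), whereas the paper lets this follow automatically from the isomorphism with the direct product for $t\neq 0$ and the semi-direct product structure at $t=0$.
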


\begin{rem}
It is well known that contractions are not unique. For instance, we can obtain a different contraction from the direct product Lie algebra $\Lg\oplus\Lg$ onto the semi-direct product Lie algebra $\Lg\rtimes\Lg$ by using the following linear map $\phi_t:\Lg\oplus\Lg\to\Lg\oplus\Lg$ for $t\in\C\setminus\{0\}$:
$$\phi_t(m,g)=(g+tm,g)$$
for $g\in\Lg$ (Lie algebra side) and $m\in\Lg$ (module side). The difference between this contraction and the previous one, is that the latter arises from a deformation associated with a coboundary, namely,  $\tilde{c}((m,g),(m',g'))=([m,m'], 0)$.
\end{rem}

\section{Isomorphy of direct and semi-direct products}

We ask ourselves in this section whether the Lie algebras $\Lg\oplus\Lg$ and  $\Lg\rtimes\Lg$ are non-isomorphic in general. Observe that for both Lie algebras, we have
$$H^0(\Lg\oplus\Lg,\Lg\oplus\Lg)\cong H^0(\Lg\rtimes\Lg,\Lg\rtimes\Lg)\cong Z(\Lg)\oplus Z(\Lg),$$
so the $0$th cohomology space with adjoint coefficients cannot distinguish them.

In order to obtain an answer, we consider the derivation algebras of the Lie algebras $\Lg\oplus\Lg$
and $\Lg\rtimes\Lg$ and compute the first cohomology spaces. Recall that we consider in the semi-direct product the module term as the adjoint $\Lg$-module, but as a trivial Lie algebra. This differs from the setting in \cite{RAS}.

\subsection{Lie algebras of derivations}\label{ss:lie_der}

\begin{prop}  \label{prop_direct_prod}
Every derivation $D:\Lg\oplus\Lg\to\Lg\oplus\Lg$ (of the direct product Lie algebra $\Lg\oplus\Lg$) is of the
matrix form
$$D=\left(\begin{array}{cc} D_1 & D_2 \\ D_3 & D_4 \end{array}\right),$$
where $D_1$ and $D_4$ are derivations of $\Lg$ and where $D_2,D_3\in\Hom(\Lg/[\Lg,\Lg],Z(\Lg))$ are otherwise arbitrary.
\end{prop}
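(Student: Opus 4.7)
The plan is to split $D$ into four blocks $D_1,D_2,D_3,D_4:\Lg\to\Lg$ according to the direct sum decomposition, via $D(m,0)=(D_1 m, D_3 m)$ and $D(0,g)=(D_2 g, D_4 g)$, and then extract the stated constraints by feeding three families of test pairs into the Leibniz identity
\[
D\bigl([(m,g),(m',g')]_\oplus\bigr) = [D(m,g),(m',g')]_\oplus + [(m,g),D(m',g')]_\oplus,
\]
using $[(m,g),(m',g')]_\oplus = ([m,m'],[g,g'])$.

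First I would take the pair $(m,0),(m',0)$. The right-hand side reduces to $([D_1 m,m']+[m,D_1 m'],0)$, while the left-hand side is $(D_1[m,m'],D_3[m,m'])$. Matching components gives immediately that $D_1$ is a derivation of $\Lg$ and that $D_3$ vanishes on $[\Lg,\Lg]$. The symmetric computation on $(0,g),(0,g')$ yields that $D_4$ is a derivation of $\Lg$ and that $D_2$ vanishes on $[\Lg,\Lg]$.

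Next, apply the Leibniz identity to a mixed pair $(m,0),(0,g')$. Since $[(m,0),(0,g')]_\oplus=(0,0)$, the left-hand side is zero, and the right-hand side works out to $([m,D_2 g'],[D_3 m,g'])$. Both components must vanish for all $m,g'\in\Lg$, so $D_2 g'\in Z(\Lg)$ and $D_3 m\in Z(\Lg)$ for every $m,g'$. Combined with the previous step, this shows $D_2,D_3\in\Hom(\Lg/[\Lg,\Lg],Z(\Lg))$, as desired.

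Finally I would check the converse: any block matrix of the stated form does satisfy the Leibniz rule on $\Lg\oplus\Lg$. This is a routine verification since the cross terms produced by $D_2$ and $D_3$ automatically vanish — the outputs land in $Z(\Lg)$, and the inputs on which they are non-zero lie outside $[\Lg,\Lg]$, so all potentially obstructing brackets are zero. There is really no hard step here; the only thing to be careful about is the bookkeeping between the two copies of $\Lg$, which is why isolating the mixed pair $(m,0),(0,g')$ is the key move that forces the centrality of the off-diagonal blocks.
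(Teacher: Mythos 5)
Your proof is correct and follows essentially the same route as the paper: split $D$ into blocks and specialize the Leibniz identity to the pairs $(m,0),(m',0)$, then $(0,g),(0,g')$, then the mixed pair, which yields exactly the paper's six conditions (derivation property of $D_1,D_4$, vanishing of $D_2,D_3$ on $[\Lg,\Lg]$, and centrality of their images). Your extra verification of the converse (that any such block matrix is a derivation, justifying the phrase ``otherwise arbitrary'') is a small, correct addition that the paper leaves implicit.
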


\begin{proof}
We compute both sides of $D[(m,g),(m',g')]=[D(m,g),(m',g')]+[(m,g),D(m',g')]$ for all $m,m'\in\Lg$ (module part) and all $g,g'\in\Lg$ (Lie algebra part).
$$D[(m,g),(m',g')]=D([m,m'],[g,g'])=(D_1[m,m']+D_2[g,g'],D_3[m,m']+D_4[g,g']).$$
On the other hand
\begin{eqnarray*}
[D(m,g),(m',g')]&=&[(D_1(m)+D_2(g),D_3(m)+D_4(g)),(m',g')] \\
&=&([D_1(m)+D_2(g),m'],[D_3(m)+D_4(g),g']).
\end{eqnarray*}
and the last term reads
\begin{eqnarray*}
[(m,g),D(m',g')]&=&[(m,g),(D_1(m')+D_2(g'),D_3(m')+D_4(g'))] \\
&=&([m,D_1(m')+D_2(g')],[g,D_3(m')+D_4(g')]).
\end{eqnarray*}
Thus we obtain the following two relations for $D$ to be a derivation:
\begin{itemize}
\item $D_1[m,m']+D_2[g,g']=[D_1(m)+D_2(g),m']+[m,D_1(m')+D_2(g')]$, and
\item $D_3[m,m']+D_4[g,g']=[D_3(m)+D_4(g),g']+[g,D_3(m')+D_4(g')]$.
\end{itemize}
From these two relations, we obtain the following six conditions on $D_1$, $D_2$, $D_3$ and $D_4$ by putting first $g=g'=0$, then $g=m'=0$ and finally $m=m'=0$:
\begin{itemize}
\item[(1)] $D_1[m,m']=[D_1(m),m']+[m,D_1(m')]$, i.e. $D_1\in\Der(\Lg)$,
\item[(2)] $D_3[m,m']=0$, i.e. $D_3$ vanishes on $[\Lg,\Lg]$,
\item[(3)] $0=[m,D_2(g')]$, i.e. $D_2$ has values in the center $Z(\Lg)$ of $\Lg$,
\item[(4)] $0=[D_3(m),g']$, i.e. $D_3$ has values in $Z(\Lg)$,
\item[(5)] $D_2[g,g']=0$, i.e. $D_2$ vanishes on $[\Lg,\Lg]$ and
\item[(6)] $D_4[g,g']=[D_4(g),g']+[g,D_4(g')]$, i.e. $D_4\in\Der(\Lg)$.
\end{itemize}
\end{proof}

\begin{prop}  \label{prop_semi_direct_prod}
Every derivation $D:\Lg\rtimes\Lg\to\Lg\rtimes\Lg$ (of the semi-direct product Lie algebra $\Lg\rtimes\Lg$) is of the
matrix form
$$D=\left(\begin{array}{cc} D_1 & D_2 \\ D_3 & D_4 \end{array}\right),$$
where $D_2$ and $D_4$ are derivations of $\Lg$, where $D_3\in\Hom(\Lg/[\Lg,\Lg],Z(\Lg))$  and where the bracket with $D_4$ describes the failure of $D_1$ to be in the centroid, i.e. for all $x,y\in\Lg$, we have
$$D_1([x,y])=[D_4(x),y]+[x,D_1(y)].$$
\end{prop}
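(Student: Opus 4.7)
The strategy mirrors the proof of Proposition \ref{prop_direct_prod}: I write $D(m,g)=(D_1(m)+D_2(g),D_3(m)+D_4(g))$ in block form, expand the derivation identity
$$D[(m,g),(m',g')]=[D(m,g),(m',g')]+[(m,g),D(m',g')]$$
using the semi-direct bracket $[(m,g),(m',g')]=([g,m']-[g',m],[g,g'])$, and read off constraints by specializing some of $m,m',g,g'$ to zero.

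Setting $m=m'=0$ reduces both bracket components to Lie brackets in the second slot; the first component then yields $D_2\in\Der(\Lg)$, while the second gives $D_4\in\Der(\Lg)$. Setting $g=g'=0$ makes the bracket vanish entirely, and the first component of the derivation equation collapses to the symmetry condition $[D_3(m),m']=[D_3(m'),m]$ for all $m,m'\in\Lg$ (the second component is trivially $0=0$). A mixed specialization, for instance $m=g'=0$, produces in the first component the centroid-failure identity
$$D_1([g,m'])=[D_4(g),m']+[g,D_1(m')],$$
and in the second component the Leibniz-like identity $D_3([g,m'])=[g,D_3(m')]$. A direct verification shows the full unspecialized derivation equation is equivalent to the conjunction of the four conditions just listed, so no further constraint appears.

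The main (and only subtle) step is to promote the two conditions obtained on $D_3$, namely $D_3([x,y])=[x,D_3(y)]$ and $[D_3(m),m']=[D_3(m'),m]$, to the stated membership $D_3\in\Hom(\Lg/[\Lg,\Lg],Z(\Lg))$. Antisymmetry of the bracket rewrites the first as $D_3([x,y])=-[y,D_3(x)]$, while the second, read as $[x,D_3(y)]=[y,D_3(x)]$, combines with $D_3([x,y])=[x,D_3(y)]$ to give
$$[x,D_3(y)]=[y,D_3(x)]=-[x,D_3(y)],$$
hence $2[x,D_3(y)]=0$, and so $[x,D_3(y)]=0$ for all $x,y\in\Lg$ since we work in characteristic zero. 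This says $\im(D_3)\subseteq Z(\Lg)$, and then $D_3([x,y])=[x,D_3(y)]=0$, so $D_3$ vanishes on $[\Lg,\Lg]$ as well. The converse assertion, that any quadruple $(D_1,D_2,D_3,D_4)$ satisfying the four listed conditions assembles into a derivation of $\Lg\rtimes\Lg$, is obtained by reading the specialization argument backwards. Thus the entire proof reduces to routine extraction of constraints via zero-substitutions, together with this single centralizing observation on $D_3$.
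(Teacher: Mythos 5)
Your proof is correct and follows essentially the same route as the paper: write $D$ in block form, expand the derivation identity for the semi-direct bracket, specialize pairs of arguments to zero to extract that $D_2,D_4\in\Der(\Lg)$, that $D_1([x,y])=[D_4(x),y]+[x,D_1(y)]$, and the two conditions $[D_3(x),y]=[D_3(y),x]$ and $D_3([x,y])=[x,D_3(y)]$, which you then combine (exactly as the paper does, and with cleaner signs) to get $[x,D_3(y)]=-[x,D_3(y)]$, hence $\im (D_3)\subseteq Z(\Lg)$ and $D_3([\Lg,\Lg])=0$. The only difference is cosmetic: you make explicit the bilinearity argument showing the specialized conditions recover the full identity.
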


\begin{proof}
We compute both sides of $D[(m,g),(m',g')]=[D(m,g),(m',g')]+[(m,g),D(m',g')]$ for all $m,m'\in\Lg$ (module part) and all $g,g'\in\Lg$ (Lie algebra part).
\begin{eqnarray*}
D[(m,g),(m',g')]&=&D([g,m']-[g',m],[g,g'])\\
&=&(D_1([g,m]-[g',m])+D_2[g,g'],D_3([g,m']-[g',m])+D_4[g,g']).
\end{eqnarray*}
On the other hand
\begin{eqnarray*}
[D(m,g),(m',g')]&=&[(D_1(m)+D_2(g),D_3(m)+D_4(g)),(m',g')] \\
&=&([D_3(m)+D_4(g),m']-[g',D_1(m)+D_2(g)],[D_3(m)+D_4(g),g']).
\end{eqnarray*}
and the last term reads
\begin{eqnarray*}
[(m,g),D(m',g')]&=&[(m,g),(D_1(m')+D_2(g'),D_3(m')+D_4(g'))] \\
&=&([g,D_1(m')+D_2(g')]-[D_3(m')+D_4(g'),m],[g,D_3(m')+D_4(g')]).
\end{eqnarray*}
Again we obtain two relations for $D$ to be a derivation:
\begin{itemize}
\item $D_1([g,m']-[g',m])+D_2[g,g']=[D_3(m)+D_4(g),m']-[g',D_1(m)+D_2(g)]+[g,D_1(m')+D_2(g')]-[D_3(m')+D_4(g'),m]$, and
\item $D_3([g,m']-[g',m])+D_4[g,g']=[D_3(m)+D_4(g),g']+[g,D_3(m')+D_4(g')]$.
\end{itemize}
From these two relations, we obtain the following five conditions on $D_1$, $D_2$, $D_3$ and $D_4$ by putting first $g=g'=0$, then $g=m'=0$ and finally $m=m'=0$:
\begin{itemize}
\item[(1)] $0=[D_3(m),m']-[D_3(m'),m]$,
\item[(2)] $D_1[g',m]=[g',D_1(m)]+[D_4(g'),m]$
\item[(3)] $D_3[g',m]=[g',D_3(m)]$, i.e. $D_3$ is in the centroid,
\item[(4)] $D_2[g,g']=-[g',D_2(g)]+[g,D_2(g')]$, i.e. $D_2\in\Der(\Lg)$, and
\item[(5)] $D_4[g,g']=[D_4(g),g']+[g,D_4(g')]$, i.e. $D_4\in\Der(\Lg)$.
\end{itemize}
Notice that Equations (1) and (3) imply
$$D_3[x,y]=[x,D_3(y)]=[y,D_3(x)]=-D_3[y,x]=-[y,D_3(x)].$$
Therefore we conclude that $D_3$ vanishes on $[\Lg,\Lg]$. But this implies in turn with (3) that the image of $D_3$ is central.
\end{proof}

\begin{lem}  \label{lemma_semi_direct_prod}
The vector space
$$X:=\{(D_1,D_4)\,|\,D_4\in\Der(\Lg)\,\,\,{\rm and}\,\,\,\forall x,y\in\Lg:\,D_1([x,y])=[D_4(x),y]+[x,D_1(y)]\},$$
which appears in the previous proposition, can be described more explicitly as
$$X\cong\Der(\Lg)\oplus{\rm Centr}(\Lg),$$
where
$${\rm Centr}(\Lg):=\{\varphi:\Lg\to\Lg\,|\,\forall x,y \in\Lg:\,\varphi([x,y])=[x,\varphi(y)]\}$$
is the centroid of $\Lg$ (see \cite{LL}).
\end{lem}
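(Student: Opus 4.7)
The plan is to exhibit an explicit linear isomorphism $X \to \Der(\Lg) \oplus \mathrm{Centr}(\Lg)$ by exploiting the fact that, in the defining condition of $X$, the map $D_4$ enters as a derivation. The natural guess is that the difference $D_1 - D_4$ should land in $\mathrm{Centr}(\Lg)$.

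More precisely, I would define
\[\Phi : X \longrightarrow \Der(\Lg) \oplus \mathrm{Centr}(\Lg), \qquad (D_1, D_4) \longmapsto (D_4,\, D_1 - D_4).\]
The first step is to check that this is well-defined: by the definition of $X$, we already have $D_4 \in \Der(\Lg)$, so the first component is fine. For the second, set $\varphi := D_1 - D_4$. Then, using the defining relation of $X$ and the fact that $D_4$ is a derivation,
\[\varphi([x,y]) = D_1([x,y]) - D_4([x,y]) = [D_4(x),y] + [x,D_1(y)] - [D_4(x),y] - [x,D_4(y)] = [x,\varphi(y)],\]
so $\varphi \in \mathrm{Centr}(\Lg)$.

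Next, I would write down the inverse explicitly: given $(D,\varphi) \in \Der(\Lg) \oplus \mathrm{Centr}(\Lg)$, set $\Psi(D,\varphi) := (D + \varphi, D)$. A direct check shows that this pair satisfies the defining relation of $X$: with $D_1 = D+\varphi$ and $D_4 = D$,
\[[D_4(x),y] + [x, D_1(y)] = [D(x),y] + [x, D(y)] + [x,\varphi(y)] = D([x,y]) + \varphi([x,y]) = D_1([x,y]).\]
Both $\Phi$ and $\Psi$ are clearly linear, and the computations $\Phi \circ \Psi = \id$ and $\Psi \circ \Phi = \id$ are immediate.

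There is essentially no obstacle here: the isomorphism is forced by the shape of the condition, and the only thing to notice is that subtracting the derivation $D_4$ from $D_1$ kills exactly the $[D_4(x),y]$ term, leaving the centroid identity for $D_1 - D_4$. I would present the proof as the two one-line verifications above.
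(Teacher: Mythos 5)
Your proposal is correct and is essentially the paper's own argument: the key computation that $D_1-D_4$ lies in $\mathrm{Centr}(\Lg)$ is identical, and your explicit isomorphism $(D_1,D_4)\mapsto(D_4,\,D_1-D_4)$ with inverse $(D,\varphi)\mapsto(D+\varphi,D)$ is just the external formulation of the paper's internal direct-sum decomposition $X=\{(D_4,D_4)\}\oplus\{(C,0)\,:\,C\in\mathrm{Centr}(\Lg)\}$. No issues to report.
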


\begin{proof}
Observe that $X$ contains the two subspaces
$$\{(D_4,D_4)\,|\,D_4\in\Der(\Lg)\}\cong\Der(\Lg)$$
and
$$\{(D_1,0)\,|\,D_1\in{\rm Centr}(\Lg)\}\cong{\rm Centr}(\Lg).$$
The intersection of these two subspaces is clearly reduced to $\{(0,0)\}$. Observe furthermore that for $(D_1,D_4)\in X$, we have that $D_1-D_4\in{\rm Centr}(\Lg)$. Indeed, for all $x,y\in\Lg$, we have
\begin{eqnarray*}
(D_1-D_4)([x,y])&=&D_1([x,y])-D_4([x,y]) \\
&=&[D_4(x),y]+[x,D_1(y)]-[D_4(x),y]-[x,D_4(y)] \\
&=&[x,(D_1-D_4)(y)],
\end{eqnarray*}
as $D_4$ is a derivation. Denote this element of the centroid by $D_1-D_4=:C$. Then we obtain in $X$ the decomposition
$$(D_1,D_4)=(C+D_4,D_4)=(C,0)+(D_4,D_4).$$
This means that every element of $X$ can be written as the sum of elements from the two subspaces and concludes the proof that $X$ is the direct sum of these two subspaces.
\end{proof}

\subsection{Consequences}

From Proposition \ref{prop_direct_prod}, one deduces

\begin{prop}
The cohomology space of the direct product is given by
$$H^1(\Lg\oplus\Lg,\Lg\oplus\Lg)=(H^1(\Lg,\Lg))^{\oplus 2}\oplus(\Hom(\Lg/[\Lg,\Lg],Z(\Lg)))^{\oplus 2}.$$
\end{prop}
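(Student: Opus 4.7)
The plan is to compute $H^1(\Lg\oplus\Lg,\Lg\oplus\Lg)$ as the quotient $\Der(\Lg\oplus\Lg)/\mathrm{InnDer}(\Lg\oplus\Lg)$, using the explicit description of the derivations provided by Proposition \ref{prop_direct_prod}. By that proposition, $\Der(\Lg\oplus\Lg)$ is already identified, as a vector space, with
$$\Der(\Lg)^{\oplus 2}\,\oplus\,\Hom(\Lg/[\Lg,\Lg],Z(\Lg))^{\oplus 2},$$
where the first summand records the diagonal blocks $D_1,D_4$ and the second records the off-diagonal blocks $D_2,D_3$. So all that is left is to see that passing to $H^1$ only modifies the diagonal part.

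The key step is to identify precisely which of these derivations are inner. For $(a,b)\in\Lg\oplus\Lg$, the inner derivation $\ad_{(a,b)}$ acts on $(m,g)\in\Lg\oplus\Lg$ by
$$\ad_{(a,b)}(m,g)=([a,m],[b,g]),$$
which in the block-matrix notation of Proposition \ref{prop_direct_prod} is the matrix with $D_1=\ad_a$, $D_4=\ad_b$ and $D_2=D_3=0$. Thus $\mathrm{InnDer}(\Lg\oplus\Lg)$ sits inside $\Der(\Lg\oplus\Lg)$ as $\mathrm{InnDer}(\Lg)^{\oplus 2}$ placed on the diagonal, and intersects the off-diagonal summand trivially.

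Dividing by the inner derivations therefore only affects the diagonal factor $\Der(\Lg)^{\oplus 2}$, giving the quotient $(\Der(\Lg)/\mathrm{InnDer}(\Lg))^{\oplus 2}=H^1(\Lg,\Lg)^{\oplus 2}$, while the off-diagonal summand $\Hom(\Lg/[\Lg,\Lg],Z(\Lg))^{\oplus 2}$ descends unchanged. Combining the two yields the claimed identification
$$H^1(\Lg\oplus\Lg,\Lg\oplus\Lg)\cong H^1(\Lg,\Lg)^{\oplus 2}\oplus\Hom(\Lg/[\Lg,\Lg],Z(\Lg))^{\oplus 2}.$$
There is no real obstacle here beyond the matching of the inner derivations with the diagonal subspace; the only mild subtlety is checking that no nontrivial inner derivation lands in the off-diagonal part, which is immediate from the formula for $\ad_{(a,b)}$.
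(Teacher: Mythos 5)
Your proposal is correct and follows the same route the paper intends: the paper deduces the formula directly from Proposition \ref{prop_direct_prod} by viewing $H^1$ with adjoint coefficients as outer derivations, exactly as you do (compare the analogous argument spelled out in Theorem \ref{thm_cohomology_semidirect_prod}, where $\ad_{(m,g)}$ is written in matrix form and one checks which summand gets divided out). Your observation that the inner derivations sit diagonally as $\ad_a,\ad_b$ and meet the off-diagonal summand $\Hom(\Lg/[\Lg,\Lg],Z(\Lg))^{\oplus 2}$ trivially is precisely the omitted detail.
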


On the other hand, from Proposition \ref{prop_semi_direct_prod} and the explicit description of $X$ in Lemma \ref{lemma_semi_direct_prod}, we obtain

\begin{thm}  \label{thm_cohomology_semidirect_prod}
The cohomology space of the semi-direct product is given by
$$H^1(\Lg\rtimes\Lg,\Lg\rtimes\Lg)=(H^1(\Lg,\Lg))^{\oplus 2}\oplus\Hom(\Lg/[\Lg,\Lg],Z(\Lg))\oplus{\rm Centr}(\Lg).$$
\end{thm}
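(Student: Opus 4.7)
The plan is to read off $\Der(\Lg\rtimes\Lg)$ from Proposition \ref{prop_semi_direct_prod} and Lemma \ref{lemma_semi_direct_prod}, then compute the inner derivations explicitly and take the quotient. By the proposition, a derivation $D$ is encoded as a matrix with entries $D_1,D_2,D_3,D_4$ subject to the listed constraints; feeding in Lemma \ref{lemma_semi_direct_prod}, the constraint on $(D_1,D_4)$ rewrites as $D_1=D_4+C$ for a unique $C\in\mathrm{Centr}(\Lg)$, with $D_4$ arbitrary in $\Der(\Lg)$. Hence I would establish the direct sum decomposition
\[
\Der(\Lg\rtimes\Lg)\;\cong\;\Der(\Lg)\oplus\Der(\Lg)\oplus\mathrm{Centr}(\Lg)\oplus\Hom(\Lg/[\Lg,\Lg],Z(\Lg)),
\]
where the factors correspond to $D_4$, $D_2$, $C=D_1-D_4$ and $D_3$, respectively.

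Next, I would compute $\mathrm{ad}(m_0,g_0)$ for a generic element $(m_0,g_0)\in\Lg\rtimes\Lg$. Using the semi-direct product bracket,
\[
\mathrm{ad}(m_0,g_0)(m,g)=\bigl([g_0,m]+[m_0,g],\,[g_0,g]\bigr),
\]
so in the matrix form of Proposition \ref{prop_semi_direct_prod} one reads off $D_1=\mathrm{ad}(g_0)$, $D_2=\mathrm{ad}(m_0)$, $D_3=0$, $D_4=\mathrm{ad}(g_0)$. In particular $C=D_1-D_4=0$ and the $\mathrm{Centr}(\Lg)$ and $\Hom(\Lg/[\Lg,\Lg],Z(\Lg))$ components of every inner derivation vanish, while the two $\Der(\Lg)$ components contribute precisely the inner derivation subspaces $\mathrm{Inn}(\Lg)$ coming independently from $g_0$ and $m_0$.

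Finally, passing to the quotient $\Der(\Lg\rtimes\Lg)/\mathrm{Inn}(\Lg\rtimes\Lg)$ respects the above decomposition: the two $\Der(\Lg)$ summands each quotient to $\Der(\Lg)/\mathrm{Inn}(\Lg)=H^1(\Lg,\Lg)$, while $\mathrm{Centr}(\Lg)$ and $\Hom(\Lg/[\Lg,\Lg],Z(\Lg))$ remain untouched, yielding the claimed formula for $H^1(\Lg\rtimes\Lg,\Lg\rtimes\Lg)$.

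The only subtle point, and the main thing I would check carefully, is that the summand decomposition really is compatible with the inner derivations — i.e.\ that distinct choices of $(m_0,g_0)$ cannot produce a non-trivial $C$ or $D_3$ component, and that the images of $m_0\mapsto \mathrm{ad}(m_0)$ and $g_0\mapsto\mathrm{ad}(g_0)$ hit the $D_2$-slot and the $D_4$-slot independently so that the two $H^1(\Lg,\Lg)$ factors genuinely decouple. Both facts follow immediately from the explicit formula for $\mathrm{ad}(m_0,g_0)$ above, so no further calculation is required.
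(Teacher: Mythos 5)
Your argument is correct and follows essentially the same route as the paper: you use the description of $\Der(\Lg\rtimes\Lg)$ from Proposition \ref{prop_semi_direct_prod} together with the splitting $(D_1,D_4)\leftrightarrow(D_4,C)$ of Lemma \ref{lemma_semi_direct_prod}, observe that $\ad_{(m_0,g_0)}$ has matrix entries $D_1=D_4=\ad_{g_0}$, $D_2=\ad_{m_0}$, $D_3=0$, hence lies in $\ad(\Lg)\oplus\ad(\Lg)\oplus 0\oplus 0$, and quotient. Your write-up is in fact slightly more explicit than the paper's proof in noting that the $D_2$-slot is also divided by $\ad(\Lg)$ (which is where the second copy of $H^1(\Lg,\Lg)$ comes from), but the method is the same.
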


\begin{proof}
Indeed, the inner derivation given by the element $(m,g)$ is in matrix notation
$$\ad_{(m,g)}=\left(\begin{array}{cc} \ad_g & \ad_m \\ 0 & \ad_g \end{array}\right).$$
Therefore, only the factor $\{(D_4,D_4)\,|\,D_4\in\Der(\Lg)\}$ gets divided by $\ad_\Lg$, the centroid factor remains unchanged.
\end{proof}

\begin{cor} \label{cor_non_isomorphy}
Let $\Lg$ be a non-abelian Lie algebra.
Then the direct product $\Lg\oplus\Lg$ and the semi-direct product $\Lg\rtimes\Lg$ are not isomorphic.
\end{cor}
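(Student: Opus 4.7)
The plan is to compare $H^1$ of the two Lie algebras, using Theorem \ref{thm_cohomology_semidirect_prod} together with the description of $H^1(\Lg\oplus\Lg,\Lg\oplus\Lg)$ from the previous proposition. If an isomorphism $\Lg\oplus\Lg\cong\Lg\rtimes\Lg$ existed, then the two first cohomology spaces would be isomorphic as vector spaces, so their dimensions would agree. After cancelling the common $(H^1(\Lg,\Lg))^{\oplus 2}$ summand and one copy of $\Hom(\Lg/[\Lg,\Lg],Z(\Lg))$, this reduces to the single equality
\[\dim\Hom(\Lg/[\Lg,\Lg],Z(\Lg))=\dim\mathrm{Centr}(\Lg).\]
The corollary will therefore follow if we can show that this equality fails for non-abelian $\Lg$.

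First I would produce a natural injection
\[\Phi:\Hom(\Lg/[\Lg,\Lg],Z(\Lg))\hookrightarrow\mathrm{Centr}(\Lg),\]
by sending $\phi$ to $\iota\circ\phi\circ\pi$, where $\pi:\Lg\to\Lg/[\Lg,\Lg]$ is the canonical projection and $\iota:Z(\Lg)\hookrightarrow\Lg$ is the inclusion. That $\Phi(\phi)$ lies in the centroid is immediate: $\Phi(\phi)([x,y])=0$ since $[x,y]\in[\Lg,\Lg]$ is killed by $\pi$, and $[x,\Phi(\phi)(y)]=0$ since $\Phi(\phi)(y)\in Z(\Lg)$. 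Injectivity of $\Phi$ is clear from the construction.

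Next I would observe that $\Phi$ is \emph{not} surjective when $\Lg$ is non-abelian, by exhibiting an explicit element of $\mathrm{Centr}(\Lg)$ outside the image, namely $\id_\Lg$. Indeed, any element in the image of $\Phi$ takes values in $Z(\Lg)$, so $\id_\Lg$ can only belong to the image if $\Lg=Z(\Lg)$, i.e.\ if $\Lg$ is abelian. Hence for non-abelian $\Lg$,
\[\dim\mathrm{Centr}(\Lg)>\dim\Hom(\Lg/[\Lg,\Lg],Z(\Lg)),\]
which contradicts the dimension equality forced by an isomorphism $\Lg\oplus\Lg\cong\Lg\rtimes\Lg$.

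There is no real obstacle here: once the cohomological formulas from Section 2 are in hand, the argument is a short linear-algebraic comparison, and the only subtlety is the simple observation that $\id_\Lg$ witnesses the strict inequality in the non-abelian case.
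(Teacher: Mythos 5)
Your proof is correct and follows essentially the same route as the paper's: one compares the computations of $H^1$ with adjoint coefficients (equivalently, of the derivation algebras) for $\Lg\oplus\Lg$ and $\Lg\rtimes\Lg$, and observes that $\Hom(\Lg/[\Lg,\Lg],Z(\Lg))$ sits inside ${\rm Centr}(\Lg)$ with $\id_\Lg$ witnessing strictness of the inclusion when $\Lg$ is non-abelian. Your write-up merely makes explicit the dimension-cancellation step that the paper leaves implicit.
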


\begin{proof}
Observe that $\Hom(\Lg/[\Lg,\Lg],Z(\Lg))\subset{\rm Centr}(\Lg)$. But $\id_\Lg\in{\rm Centr}(\Lg)$, and $\id_\Lg$ is not in $\Hom(\Lg/[\Lg,\Lg],Z(\Lg))$ as soon as $\Lg$ is non-abelian.
\end{proof}

As an example, note that for a simple finite-dimensional complex Lie algebra $\Lg$, the centroid ${\rm Centr}(\Lg)$ is the field of complex numbers $\C$ by the Schur Lemma and $\Hom(\Lg/[\Lg,\Lg],Z(\Lg))=0$, thus we obtain in this case
$$\Der(\Lg\rtimes\Lg)=\Der(\Lg\oplus\Lg)\oplus\{\lambda \id_\Lg\,|\,\lambda\in\C\}.$$


\section{Deformations of semi-direct products related to crossed modules}

\subsection{The cocycle}
In this section, we go from the semi-direct product $\Lg\rtimes\Lg$ to semi-direct products of the form $\Lh\rtimes\Lg$. It turns out that the construction of the cocycle $c$ works in case there is a crossed module
$\mu:\Lh\to\Lg$.

\begin{defi}
A Lie algebra homomorphism $\mu:\Lh\to\Lg$ together with an action of $\Lg$ on $\Lh$ by derivations is a {\it crossed module of Lie algebras} in case we have for all $g\in\Lg$ and all $h,h'\in\Lh$
\begin{itemize}
\item[(a)] $\mu(g\cdot h)=[g,\mu(h)]$ and
\item[(b)] $\mu(h)\cdot h'=[h,h']$.
\end{itemize}
\end{defi}

\begin{prop} \label{prop_cocycle_crmod}
Let $\mu:\Lh\to\Lg$ be a crossed module of Lie algebras. Then the $2$-cochain $c\in C^2(\Lh\rtimes\Lg,\Lh\rtimes\Lg)$ defined by
$$c((h,g),(h',g')):=(0,\mu([h,h']))$$
is a $2$-cocycle.
\end{prop}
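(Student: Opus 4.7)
The plan is to verify the Chevalley--Eilenberg 2-cocycle identity
\begin{equation*}
(\delta c)(x_1,x_2,x_3) = \sum_{\mathrm{cyc}} \bigl([x_i, c(x_j,x_k)] - c([x_i,x_j],x_k)\bigr) = 0
\end{equation*}
with $x_i = (h_i, g_i) \in \Lh\rtimes\Lg$, by splitting it into its $\Lh$-component and its $\Lg$-component, exactly as in the proof of the analogous lemma for $\Lh = \Lg$ in Section~1. Each component will boil down, after a short calculation, to one elementary identity coming from $\Lh$ alone.

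First I would handle the $\Lh$-component. Since $c$ is supported in the $\Lg$-factor, the only $\Lh$-contribution to the cocycle identity comes from the three semidirect brackets $[x_i, c(x_j, x_k)]$. Unfolding the semidirect product bracket and applying crossed module axiom (b) to $\mu([h_j, h_k]) \cdot h_i$ converts the $\Lh$-component of $[x_i, c(x_j, x_k)]$ into the iterated bracket $[h_i, [h_j, h_k]]$ inside $\Lh$. The cyclic sum is then precisely the Jacobi identity in $\Lh$, hence vanishes.

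Next I would treat the $\Lg$-component. Using axiom (a), the factor $[g_i, \mu([h_j, h_k])]$ becomes $\mu(g_i \cdot [h_j, h_k])$, and the derivation property of the $\Lg$-action on $\Lh$ lets me distribute $g_i$ across the bracket, producing $\mu([g_i \cdot h_j, h_k]) + \mu([h_j, g_i \cdot h_k])$. On the other side, a direct unfolding of the semidirect bracket gives $c([x_i, x_j], x_k) = (0, \mu([g_i \cdot h_j - g_j \cdot h_i, h_k]))$. The terms $\mu([g_i \cdot h_j, h_k])$ cancel inside each cyclic slot, leaving
\begin{equation*}
\sum_{\mathrm{cyc}} \bigl(\mu([h_j, g_i \cdot h_k]) + \mu([g_j \cdot h_i, h_k])\bigr),
\end{equation*}
which vanishes because every summand is paired, under a different cyclic rotation, with its opposite obtained by swapping the two arguments of the $\Lh$-bracket; antisymmetry on $\Lh$ closes the argument.

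The main obstacle, if any, is purely bookkeeping: one must carefully distinguish the trivial abelian bracket on $\Lh$ as it sits inside $\Lh \rtimes \Lg$ from the genuine Lie bracket on $\Lh$ that appears inside $c$, and invoke axioms (a) and (b) at exactly the right moments. Once the notation is sorted out, the proof rests on only two non-formal inputs, namely the Jacobi identity in $\Lh$ for the $\Lh$-component and the antisymmetry of the $\Lh$-bracket for the $\Lg$-component.
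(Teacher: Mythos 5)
Your verification is correct, and it follows the paper's overall strategy: a direct check of the cyclic form of the cocycle identity, split into the $\Lh$-component and the $\Lg$-component, with the $\Lh$-component handled exactly as in the paper (axiom (b) turns $-\mu([h_j,h_k])\cdot h_i$ into $[h_i,[h_j,h_k]]$, and the cyclic sum dies by Jacobi in $\Lh$). Where you diverge is in closing the $\Lg$-component: the paper uses that $\mu$ is a Lie algebra homomorphism together with axiom (a) to rewrite everything as nested brackets $[[g,\mu(h')],\mu(h'')]$ and $[g,[\mu(h'),\mu(h'')]]$ in $\Lg$, and then invokes the Jacobi identity there; you instead stay inside $\Lh$, using axiom (a) plus the fact that $\Lg$ acts by derivations to expand $[g_i,\mu([h_j,h_k])]=\mu([g_i\cdot h_j,h_k])+\mu([h_j,g_i\cdot h_k])$, after which the terms cancel against $c([x_i,x_j],x_k)$ slotwise and the remainder vanishes by antisymmetry of the $\Lh$-bracket (I checked the pairing; it is right). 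Both routes are equally short; yours has the mild structural advantage of never invoking the homomorphism property of $\mu$ (which in any case follows from axioms (a) and (b)), relying instead on the derivation property of the action, whereas the paper's computation has the advantage of mirroring verbatim the $\Lg=\Lh$ calculation of Section 1, of which this proposition is the generalization.
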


\begin{proof}
We compute the cocycle identity for all $h,h',h''\in\Lh$ and all $g,g',g''\in\Lg$. We obtain using Property (a) of a crossed module from the second to the third line
\begin{eqnarray*}
c([(h,g),(h',g')],(h'',g''))&=&c((g\cdot h'-g'\cdot h,[g,g']),(h'',g''))\\
&=&(0,\mu([g\cdot h'-g'\cdot h,h'']))\\
&=&(0,[[g,\mu(h')],\mu(h'')]-[[g',\mu(h)],\mu(h'')]).
\end{eqnarray*}
On the other hand, we obtain using Property (b) of a crossed module from the second to the third line
\begin{eqnarray*}
[(h,g),c((h',g'),(h'',g''))]&=&[(h,g),(0,\mu([h',h'']))]\\
&=&(-\mu([h',h''])\cdot h,[g,\mu([h',h''])])\\
&=&(-[[h',h''],h],[g,[\mu(h'),\mu(h'')]]).
\end{eqnarray*}
Using cyclic permutations of $(h,h',h'')$ and $(g,g',g'')$, we thus obtain zero in the first component by the Jacobi identity in $\Lh$, and zero in the second component by the Jacobi identity in $\Lg$ on elements of the form
$[g,[\mu(h'),\mu(h'')]]$.
\end{proof}

Observe that in the special case where the crossed module is $\id_\Lg:\Lg\to\Lg$, the construction reduces to the construction of Section 1.

\begin{thm}  \label{thm_non_trivial_cocycle_crmod}
Let $\Lg$, $\Lh$ be Lie algebras such that there exists a crossed module $\mu:\Lh\to\Lg$ with $\mu([\Lh,\Lh])\not=0$. Then
$$H^2(\Lh\rtimes\Lg,\Lh\times\Lg)\not=0.$$
\end{thm}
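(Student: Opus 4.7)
The plan is to run the same argument as in Proposition \ref{prop_non_trivial_cocycle}, with only minor adjustments to account for the extra flexibility coming from $\mu$ being a general crossed module rather than the identity. Namely, we suppose for contradiction that $c = d\alpha$ for some $1$-cochain $\alpha:\Lh\rtimes\Lg\to\Lh\rtimes\Lg$, decompose $\alpha(h,g)=(\alpha_1(h,g),\alpha_2(h,g))$ with $\alpha_1$ taking values in $\Lh$ and $\alpha_2$ taking values in $\Lg$, and then evaluate $d\alpha$ at pairs $((h,0),(h',0))$.

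The key point is that in the semi-direct product $\Lh\rtimes\Lg$, the factor $\Lh$ is treated as an \emph{abelian} Lie algebra, so that $[(h,0),(h',0)] = (0,0)$ holds irrespective of whether $\Lh$ has a non-trivial bracket of its own. Therefore the coboundary term $\alpha([(h,0),(h',0)])$ drops out, and a direct computation analogous to the one in the proof of Proposition \ref{prop_non_trivial_cocycle} gives
\[d\alpha((h,0),(h',0)) = (-\alpha_2(h',0)\cdot h + \alpha_2(h,0)\cdot h',\,0),\]
whose second component is identically zero. On the other hand, $c((h,0),(h',0)) = (0,\mu([h,h']))$, and by the hypothesis $\mu([\Lh,\Lh])\neq 0$ there exist $h,h'\in\Lh$ with $\mu([h,h'])\neq 0$. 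This contradicts the assumption $c = d\alpha$, so $c$ is not a coboundary, and since by Proposition \ref{prop_cocycle_crmod} it is a cocycle, it defines a non-trivial class in $H^2(\Lh\rtimes\Lg,\Lh\rtimes\Lg)$.

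I do not anticipate a serious obstacle: the hypothesis $\mu([\Lh,\Lh])\neq 0$ is tailored exactly to make the argument work, and the abelian structure on the $\Lh$-factor in the semi-direct product makes the $\Lh$-$\Lh$ slice of the coboundary formula extremely rigid. The only small care needed is to confirm that the bracket conventions on $\Lh\rtimes\Lg$ indeed kill $[(h,0),(h',0)]$ in the semi-direct product even when $\Lh$ is non-abelian as a Lie algebra on its own; this is precisely the convention fixed in the introduction and used in Proposition \ref{prop_cocycle_crmod}, so the plan goes through verbatim.
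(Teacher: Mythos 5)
Your proposal is correct and is essentially the paper's own proof: the authors likewise reduce to the coboundary computation of Proposition \ref{prop_non_trivial_cocycle}, restrict to pairs $((h,0),(h',0))$ where the semi-direct product bracket vanishes (the $\Lh$-factor being abelian), observe that the second component of any coboundary is then zero, and contrast this with $c((h,0),(h',0))=(0,\mu([h,h']))$, which is nonzero by the hypothesis $\mu([\Lh,\Lh])\neq 0$. No further comment is needed.
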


Observe that the hypothesis and the statement are related by the fact that the semi-direct product is built out of the action which is given by the crossed module $\mu:\Lh\to\Lg$. The hypothesis excludes for example abelian Lie algebras $\Lh$. Observe furthermore that in the semi-direct product, the vector space $\Lh$ is considered as a $\Lg$-module, but as an abelian Lie algebra.

\begin{proof}
The $2$-cochain considered in the previous proposition generates in fact a non-trivial cohomology class. This follows exactly like in Proposition \ref{prop_non_trivial_cocycle}: The cocycle can possibly be obtained as a coboundary only if the second component is zero. But this is the case only if $\mu([\Lh,\Lh])=0$ which is excluded.
\end{proof}

\subsection{The contraction}

As before, we obtain a contraction associated to this cocycle/defor\-ma\-tion. Define $\phi_t:\Lh\oplus\Lg\to\Lh\oplus\Lg$ by $\phi_t(h,g):=(t^{-1/2}h,g)$ for all $h\in\Lh$ and all $g\in\Lg$. The linear map $\phi_t$ is clearly invertible for $t\not=0$. Then for the bracket
$$[(h,g),(h',g')]_1:=(g\cdot h'-g'\cdot h,[g,g']+\mu([h,h'])),$$
we compute
$$\phi_t[\phi_t^{-1}(h,g),\phi_t^{-1}(h',g')]_1.$$
We obtain
\begin{eqnarray*}
\phi_t[\phi_t^{-1}(h,g),\phi_t^{-1}(h',g')]_1&=&\phi_t[(t^{1/2}h,g),(t^{1/2}h',g')]_1 \\
&=&\phi_t(t^{1/2}(g\cdot h'-g'\cdot h),[g,g']+\mu([h,h'])) \\
&=&(g\cdot h'-g'\cdot h,[g,g']+t\mu([h,h']))
\end{eqnarray*}
Thus the bracket $[\cdot,\cdot]_t$ contracts onto the semi-direct product bracket in the limit $t\to 0$. This also shows that all the Lie algebras which we obtain for different values of $t\not=0$ are isomorphic. We will provide some criteria to identify the deformed Lie algebra in the next section.

\subsection{Functoriality and Universality}

Let us briefly study the functoriality of the class $[c]$ associated to a crossed module $\mu:\Lh\to\Lg$
with respect to morphisms of crossed modules $(\phi,\psi):(\mu:\Lh\to\Lg)\to(\mu':\Lh'\to\Lg')$.

\begin{defi}
A {\it morphism of crossed modules} $(\phi,\psi):(\mu:\Lh\to\Lg)\to(\mu':\Lh'\to\Lg')$ is a pair of Lie algebra morphisms $\phi:\Lh\to\Lh'$ and $\psi:\Lg\to\Lg'$ such that $\mu'\circ\phi=\psi\circ\mu$ and for all $g\in\Lg$ and all $h\in\Lh$,
$$\phi(g\cdot h)=\psi(g)\cdot\phi(h).$$
\end{defi}

The deformed bracket behaves covariantly with respect to a morphism:

\begin{lem}
Let $\mu:\Lh\to\Lg$ be a crossed module of Lie algebras and $(\phi,\psi):(\mu:\Lh\to\Lg)\to(\mu':\Lh'\to\Lg')$ a morphism of crossed modules. Then the deformed bracket
$$[(h,g),(h',g')]_1=(g\cdot h'-g'\cdot h,[g,g']+\mu([h,h']))$$
on $\Lh\oplus\Lg$ is sent to the deformed bracket on $\Lh'\oplus\Lg'$ via $(\phi,\psi)$.
\end{lem}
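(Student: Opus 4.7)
The plan is to define the obvious product map $F:=\phi\oplus\psi:\Lh\oplus\Lg\to\Lh'\oplus\Lg'$ by $F(h,g):=(\phi(h),\psi(g))$ and to verify directly that it intertwines the two deformed brackets, i.e.\ that $F\bigl([(h,g),(h',g')]_1\bigr)=[F(h,g),F(h',g')]_1'$. Since both brackets have a ``module'' component in $\Lh$ (resp.\ $\Lh'$) and a ``Lie algebra'' component in $\Lg$ (resp.\ $\Lg'$), the verification splits into two independent checks, each of which is a short diagram chase using the defining properties of a morphism of crossed modules.

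For the first (module) component, I would expand
\[\phi(g\cdot h'-g'\cdot h)=\phi(g\cdot h')-\phi(g'\cdot h)\]
by linearity of $\phi$, and then use the intertwining property $\phi(g\cdot h)=\psi(g)\cdot\phi(h)$ from the definition of a morphism of crossed modules to rewrite the right hand side as $\psi(g)\cdot\phi(h')-\psi(g')\cdot\phi(h)$. This is precisely the first component of $[(\phi(h),\psi(g)),(\phi(h'),\psi(g'))]_1'$.

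For the second (Lie algebra) component, I would use successively that $\psi$ is a Lie algebra morphism to obtain $\psi([g,g'])=[\psi(g),\psi(g')]$, then the compatibility $\psi\circ\mu=\mu'\circ\phi$ to convert $\psi(\mu([h,h']))$ into $\mu'(\phi([h,h']))$, and finally that $\phi$ is a Lie algebra morphism to rewrite this as $\mu'([\phi(h),\phi(h')])$. Added together, these yield $[\psi(g),\psi(g')]+\mu'([\phi(h),\phi(h')])$, which is the second component of $[F(h,g),F(h',g')]_1'$.

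I do not expect any real obstacle here: every one of the four defining conditions of a morphism of crossed modules (the two Lie algebra morphism conditions, the square $\mu'\circ\phi=\psi\circ\mu$, and the equivariance $\phi(g\cdot h)=\psi(g)\cdot\phi(h)$) is used exactly once, and the check is a matter of careful bookkeeping rather than of any clever argument. It is essentially the statement that the deformed bracket depends only on data which are preserved by morphisms of crossed modules, so the claim is really a functoriality statement for the assignment $(\mu:\Lh\to\Lg)\mapsto(\Lh\oplus\Lg,[\cdot,\cdot]_1)$.
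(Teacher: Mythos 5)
Your proposal is correct and coincides with the paper's own proof: both apply the map $(\phi,\psi)$ componentwise to the deformed bracket, using equivariance $\phi(g\cdot h)=\psi(g)\cdot\phi(h)$ in the first component and $\psi[g,g']=[\psi(g),\psi(g')]$ together with $\psi\circ\mu=\mu'\circ\phi$ and $\phi[h,h']=[\phi(h),\phi(h')]$ in the second. There is nothing to add.
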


\begin{proof}
Let $h,h'\in\Lh$ and $g,g'\in\Lg$ and compute:
\begin{eqnarray*}
(\phi,\psi)[(h,g),(h',g')]_1&=&(\phi,\psi)(g\cdot h'-g'\cdot h,[g,g']+\mu([h,h'])) \\
&=&(\phi(g\cdot h'-g'\cdot h),\psi[g,g']+\psi\circ\mu([h,h'])) \\
&=&(\psi(g)\cdot\phi(h')-\psi(g')\cdot\phi(h),[\psi(g),\psi(g')]+\mu'[\phi(h),\phi(h')]) \\
&=&[(\phi(h),\psi(g)),(\phi(h'),\psi(g'))]_1.
\end{eqnarray*}
\end{proof}

On the other hand, the cohomology classes behave contravariantly with respect to a morphism: Recall that for a cocycle $c\in C^k(\Lg',M')$, the induced cocycle for a morphism $\psi:\Lg\to\Lg'$ is the composition
$$\Lambda^k\Lg\stackrel{\Lambda^k \psi}{\longrightarrow}\Lambda^k\Lg'\stackrel{c}{\to} M'.$$
This induces a map $\psi^*:C^k(\Lg',M')\to C^k(\Lg,(M')^\psi)$, where $(M')^\psi$ is the $\Lg$-module with underlying vector space $M'$ and $\Lg$-action via $\psi$. The map $\psi^*$ induces a map in cohomology still denoted $\psi^*$.

\begin{lem}
Let $(\phi,\psi):(\mu:\Lh\to\Lg)\to(\mu':\Lh'\to\Lg')$ be a morphism of crossed modules. Then the
$2$-cocycle $c'$, associated to $\mu':\Lh'\to\Lg'$ and defined for all $h,h'\in\Lh'$ and all $g,g'\in\Lg'$ by
$$c'((h,g),(h',g'))=\mu'([h,h'])$$
on the semi-direct product $\Lh'\times^{\mu'}_1\Lg'$, is sent to the $2$-cocycle $\psi^*(c)$, image of the cocycle associated to $\mu:\Lh\to\Lg$, via $(\phi,\psi)$.
\end{lem}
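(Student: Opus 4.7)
The plan is a direct unpacking of definitions. Both $(\phi,\psi)^{\ast}c'$ and $(\phi,\psi)\circ c$ are alternating bilinear maps on $\Lh\rtimes\Lg$ with values in $\Lh'\rtimes\Lg'$ (viewed, on the left-hand side, as a $(\Lh\rtimes\Lg)$-module via $(\phi,\psi)$), so it suffices to evaluate them on a generic pair $\bigl((h,g),(h',g')\bigr)$ and check that the outputs coincide.

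First I would unfold the pullback cocycle:
\[
(\phi,\psi)^{\ast}c'\bigl((h,g),(h',g')\bigr)
= c'\bigl((\phi(h),\psi(g)),(\phi(h'),\psi(g'))\bigr)
= \bigl(0,\,\mu'([\phi(h),\phi(h')])\bigr).
\]
Two ingredients from the definition of a morphism of crossed modules then simplify this: $\phi\colon\Lh\to\Lh'$ is a Lie algebra morphism, hence $[\phi(h),\phi(h')]=\phi([h,h'])$; and the square identity $\mu'\circ\phi=\psi\circ\mu$ lets me replace $\mu'\circ\phi$ by $\psi\circ\mu$. The result is $\bigl(0,\,\psi(\mu([h,h']))\bigr)$.

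On the other hand, applying $(\phi,\psi)$ as a linear map to
\[
c((h,g),(h',g')) = \bigl(0,\,\mu([h,h'])\bigr)
\]
produces $\bigl(0,\,\psi(\mu([h,h']))\bigr)$ directly, since $\phi(0)=0$ and the second component lies in $\Lg$, on which $(\phi,\psi)$ acts by $\psi$. The two computations match, which is what we wanted.

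There is no real obstacle here: the statement is a formal consequence of the square identity $\mu'\circ\phi=\psi\circ\mu$ and of $\phi$ being a Lie algebra morphism, combined with the definitions of $c$, $c'$ and of the pullback. It is worth noting that, in contrast with the preceding lemma on the deformed bracket, the action-compatibility axiom $\phi(g\cdot h)=\psi(g)\cdot\phi(h)$ plays no role: the cocycle $c$ is insensitive to the $\Lg$-action, depending only on the Lie bracket of $\Lh$ and on the morphism $\mu$. The only care one must take is notational, namely to identify, as in the paragraph preceding the lemma, the meaning of $\psi^{\ast}$ applied to a cocycle on the semi-direct product, i.e.\ the composition with $\Lambda^{2}(\phi,\psi)$ on the source.
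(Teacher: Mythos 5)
Your proof is correct and takes essentially the same route as the paper: pull back $c'$ along $(\phi,\psi)$, use that $\phi$ is a Lie algebra morphism together with the square identity $\mu'\circ\phi=\psi\circ\mu$, and recognize the result $(0,\psi(\mu([h,h'])))$ as $\psi$ applied to the values of $c$. Your added remarks (that the action-compatibility axiom is not needed, and the notational point about what the pullback means on the semi-direct product) are accurate but do not change the argument.
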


\begin{proof}
Let $h,h'\in\Lh$ and $g,g'\in\Lg$. We compute
\begin{eqnarray*}
(\phi,\psi)(c')((h,g),(h',g'))&=&\mu'([\phi(h),\phi(h')]) \\
&=&\mu'\circ\phi([h,h']) \\
&=&\psi\circ\mu([h,h']) \\
&=& \psi(c)([h,h']).
\end{eqnarray*}
\end{proof}

It is clear that this cocycle has values in the factor $\Lg$ (resp. $\Lg'$) of the semi-direct product $\Lh\rtimes\Lg$ (resp. $\Lh'\rtimes\Lg'$). One can therefore view it also as $c'\in C^2(\Lh'\rtimes\Lg',\Lg')$ being sent to
$(\phi,\psi)(c')\in C^2(\Lg\rtimes\Lg,\Lg')$, where $\Lg'$ is viewed as a $\Lh\rtimes\Lg$-module via $\psi$. This has the advantage that the module does not change.

\begin{thm}  \label{thm_universality}
The cocycle $c\in C^2(\Lg\rtimes\Lg,\Lg\rtimes\Lg)$, defined by
$$c((h,g),(h',g'))=[h,h'],$$
is universal in the sense that it induces the cocycle
$$c((h,g),(h',g'))=\mu([h,h'])$$
for any crossed module $\mu:\Lh\to\Lg$ via a morphism $(\phi,\psi):(\mu:\Lh\to\Lg)\to(\id:\Lg\to\Lg)$.
\end{thm}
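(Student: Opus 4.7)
The plan is to exhibit an explicit morphism of crossed modules $(\phi,\psi):(\mu:\Lh\to\Lg)\to(\id_\Lg:\Lg\to\Lg)$ and then apply the preceding contravariance lemma. There is essentially only one natural candidate, namely $\phi:=\mu$ and $\psi:=\id_\Lg$, so the argument reduces to verifying that this pair is indeed a morphism of crossed modules and that pulling back the universal cocycle along it produces the desired one.

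First, I would check that $(\mu,\id_\Lg)$ is a morphism of crossed modules. The commuting-square condition $\id_\Lg\circ\phi=\psi\circ\mu$ reads $\mu=\mu$ and is trivial. The equivariance condition $\phi(g\cdot h)=\psi(g)\cdot\phi(h)$ becomes
$$\mu(g\cdot h)=g\cdot\mu(h)=[g,\mu(h)],$$
where the action of $\Lg$ on itself appearing in the crossed module $\id_\Lg:\Lg\to\Lg$ is understood to be the adjoint action. This is nothing but property (a) of the crossed module $\mu:\Lh\to\Lg$, so $(\mu,\id_\Lg)$ is a legitimate morphism of crossed modules.

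Next, I would invoke the contravariance lemma proven just above to compute the pullback of the universal cocycle $c'((h,g),(h',g'))=[h,h']$ on $\Lg\rtimes\Lg$ along $(\mu,\id_\Lg)$. The formula supplied by the lemma gives
$$\bigl((\mu,\id_\Lg)^*c'\bigr)((h,g),(h',g'))=[\mu(h),\mu(h')]=\mu([h,h']),$$
where the last equality uses that $\mu$ is a Lie algebra homomorphism (which is part of the definition of a crossed module). The right-hand side is exactly the cocycle associated to the crossed module $\mu:\Lh\to\Lg$, which is the content of the theorem.

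There is no real obstacle in this argument; the universality statement is essentially a formal consequence of the fact that $\id_\Lg:\Lg\to\Lg$ is a canonical target for any crossed module over $\Lg$, together with the naturality of the cocycle construction established in the preceding lemma. The only item that might be worth flagging is the minor subtlety that $c$ and $c'$ are regarded as having values in the Lie-algebra factor of the respective semi-direct products (as explained in the paragraph preceding the theorem statement), so that $(\mu,\id_\Lg)^*c'$ takes values in $\Lg$ rather than in $\Lh\rtimes\Lg$; this matches the target of the cocycle $\mu([h,h'])$ and so no additional reconciliation is needed.
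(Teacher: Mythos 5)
Your proposal is correct and follows essentially the same route as the paper: take $(\phi,\psi)=(\mu,\id_\Lg)$ and pull back the cocycle $[h,h']$ to get $[\mu(h),\mu(h')]=\mu([h,h'])$. You additionally verify explicitly that $(\mu,\id_\Lg)$ is a morphism of crossed modules (via property (a)), a check the paper leaves implicit, but this is the same argument.
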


\begin{proof}
The morphism $(\phi,\psi):(\mu:\Lh\to\Lg)\to(\id:\Lg\to\Lg)$ is given by $(\phi,\psi)=(\mu,\id_\Lg)$. It sends the
cocycle
$$c((h,g),(h',g'))=[h,h']$$
to the cocycle
$$(\phi,\psi)(c)((h,g),(h',g'))=[\phi(h),\phi(h')]=\phi([h,h'])=\mu([h,h']).$$
\end{proof}

Here again, one may restrict the coefficient module to $\Lg$ in order to have the same coefficient module in the domain and in the range.


\section{Identifying the deformed Lie algebra}

The problem we address in this section is to identify the deformed Lie algebra $(\Lh\oplus\Lg,[\cdot,\cdot]_t)$ associated to the crossed module $\mu:\Lh\to\Lg$. We have seen before that for the crossed module $\id_\Lg:\Lg\to\Lg$, this bracket is isomorphic to the direct product bracket. Our main concern in this section is to decide whether the deformed Lie algebra is or is not isomorphic to the direct product Lie algebra and to the semi-direct product Lie algebra.

\subsection{The case where \texorpdfstring{$\mu$}{}
is an isomorphism}

\begin{prop}\label{Prop.iso}
In case the map $\mu$ in the crossed module $\mu:\Lh\to\Lg$ is an isomorphism, the deformed Lie algebra
$(\Lh\oplus\Lg,[\cdot,\cdot]_t)$ is isomorphic to the direct product Lie algebra $\Lh\oplus\Lg$.
\end{prop}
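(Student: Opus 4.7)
The plan is to reduce the statement to the case $\mu=\id_\Lg$ treated in Section 1 by invoking the functoriality of the deformed bracket established in the previous subsection. The key observation is that when $\mu:\Lh\to\Lg$ is an isomorphism, the pair $(\phi,\psi):=(\mu,\id_\Lg)$ constitutes a morphism of crossed modules from $(\mu:\Lh\to\Lg)$ to $(\id_\Lg:\Lg\to\Lg)$. Indeed, $\mu$ is automatically a Lie algebra morphism: combining the two crossed module axioms gives $\mu([h,h'])=\mu(\mu(h)\cdot h')=[\mu(h),\mu(h')]$. The compatibility square $\id_\Lg\circ\mu=\id_\Lg\circ\mu$ is trivial, and the equivariance condition $\mu(g\cdot h)=[g,\mu(h)]$ is just axiom (a) of the crossed module $\mu:\Lh\to\Lg$, since the action of $\Lg$ on itself in the target crossed module is the adjoint one.

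Since $\mu$ is an isomorphism of Lie algebras, the morphism $(\mu,\id_\Lg)$ is in particular a vector-space isomorphism. The functoriality lemma from the previous subsection then produces a Lie algebra isomorphism
$$(\Lh\oplus\Lg,[\cdot,\cdot]_1^{\mu})\ \xrightarrow{\ (\mu,\id_\Lg)\ }\ (\Lg\oplus\Lg,[\cdot,\cdot]_1^{\id_\Lg}).$$
By the explicit computation recalled in subsection \ref{ss:def_cont}, the map $\psi_1$ is an isomorphism from $(\Lg\oplus\Lg,[\cdot,\cdot]_1^{\id_\Lg})$ to the direct product $(\Lg\oplus\Lg,[\cdot,\cdot]_\oplus)$. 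Composing further with the evident isomorphism of direct products $(\mu^{-1},\id_\Lg):(\Lg\oplus\Lg,[\cdot,\cdot]_\oplus)\to(\Lh\oplus\Lg,[\cdot,\cdot]_\oplus)$ yields the required isomorphism from the deformed Lie algebra to the direct product $\Lh\oplus\Lg$.

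I do not foresee a serious obstacle: everything reduces to bookkeeping between two parallel decompositions, with all substantive computation already carried out in Section 1 and in the functoriality lemma. A direct alternative, avoiding functoriality entirely, is to write down the explicit isomorphism $\Psi(h,g):=(\mu^{-1}(g)-h,\,g+\mu(h))$ from $(\Lh\oplus\Lg,[\cdot,\cdot]_\oplus)$ to $(\Lh\oplus\Lg,[\cdot,\cdot]_1^\mu)$ (with inverse $(a,b)\mapsto\tfrac{1}{2}(\mu^{-1}(b)-a,\mu(a)+b)$) and to verify the bracket identity directly using the two crossed module axioms; unwound, this is essentially the same argument.
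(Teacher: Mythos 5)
Your main argument is correct and is essentially the paper's proof in mirror image: the paper transfers the deformed bracket along the crossed-module isomorphism $(\id_{\Lh},\mu^{-1})$ to the identity crossed module $\id_\Lh:\Lh\to\Lh$ and verifies the map $\id_\Lh+\mu$ by hand, while you transfer along $(\mu,\id_\Lg)$ to $\id_\Lg:\Lg\to\Lg$ and cite the functoriality lemma --- the same reduction to the case of Section 1. Two small points. First, you argue at $t=1$ while the statement concerns $[\cdot,\cdot]_t$ (implicitly $t\not=0$, as in the paper); either remark that the functoriality computation carries the parameter $t$ along verbatim, or invoke the maps $\phi_t$ of the contraction subsection to identify all deformed algebras with $t\not=0$. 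Second, your optional ``direct alternative'' states the explicit isomorphism in the wrong direction: $\Psi(h,g)=(\mu^{-1}(g)-h,\,g+\mu(h))$ is an isomorphism from the deformed algebra $(\Lh\oplus\Lg,[\cdot,\cdot]_1^\mu)$ onto the direct product, not from the direct product to the deformed algebra; as written it fails the homomorphism identity by a factor $2$ (for $\mu=\id_\Lg$ one has $\psi_1^2=2\,\id$, and a scaling $\lambda\,\id$ is an automorphism only for $\lambda=1$), exactly as $\psi_t$ in Section 1 maps the deformed bracket to the direct one and not conversely. This slip does not affect your main proof.
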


\begin{proof}
Observe that we have a morphism of crossed modules from $\mu:\Lh\to\Lg$ to $\id_\Lh:\Lh\to\Lh$ induced by
$(\id_\Lh,\mu^{-1})$. Indeed, it makes the square commutative: $\mu^{-1}\circ\mu=\id_\Lh\circ\id_\Lh$. Moreover,
it commutes with the actions. For this, we have to show for all $g\in\Lg$ and all $h\in\Lh$ that
\begin{equation}   \label{*}
\mu^{-1}(g)\cdot h=g\cdot h
\end{equation}
(where the action on the LHS is in the crossed module $\id_\Lh:\Lh\to\Lh$ and on the RHS in the crossed module $\mu:\Lh\to\Lg$). But this is clear by applying the isomorphism $\mu$ to both sides. This means that we have an isomorphism of crossed modules.

Thus the constructions of the deformed Lie algebra must correspond under this isomorphism. Indeed, for the crossed module $\mu:\Lh\to\Lg$, we have the bracket
$$[(h,g),(h',g')]_t':=(g\cdot h'-g'\cdot h,[g,g']+t\mu([h,h'])),$$
and for the crossed module $\id_\Lh:\Lh\to\Lh$, we have the bracket
$$[(h,k),(h',k')]_t'':=(k\cdot h'-k'\cdot h,[k,k']+t[h,h']).$$
We claim that the linear map $\varphi:=\id_\Lh+\mu:(\Lh\oplus\Lh,[\cdot,\cdot]_t'')\to(\Lh\oplus\Lg,[\cdot,\cdot]_t')$ is an isomorphism of Lie algebras. For this, we compute on the one hand
\begin{eqnarray*}
[\varphi(h,k),\varphi(h',k')]_t'&=&[(h,\mu(k)),(h',\mu(k'))] \\
&=&(\mu(k)\cdot h'-\mu(k')\cdot h,\mu[k,k']+t\mu([h,h'])).
\end{eqnarray*}
And on the other hand
\begin{eqnarray*}
\varphi([(h,k),(h',k')]_t'')&=&\varphi(k\cdot h'-k'\cdot h,[k,k']+t[h,h']) \\
&=&(k\cdot h'-k'\cdot h,\mu[k,k']+t\mu([h,h'])).
\end{eqnarray*}
In fact, both expressions are equal, because the actions in the two crossed modules correspond via Equation \eqref{*}. As the deformed Lie algebra $(\Lh\oplus\Lh,[\cdot,\cdot]_t'')$ is isomorphic to the direct product for $\id_\Lh:\Lh\to\Lh$, it must be the same for the deformed bracket $[\cdot,\cdot]_t'$ built from the crossed module $\mu:\Lg\to\Lh$.
\end{proof}

We have now an example which shows that $[\cdot,\cdot]_t$ can be isomorphic to the direct product even if $\mu$ is not an isomorphism.

\begin{ex}\label{ex injective}
Consider a simple Lie algebra $\Lg$ and the crossed module given by the inclusion $\Lg\to\Lg\oplus\Lg$ of the ideal $\Lg$ into the first factor of $\Lg\oplus\Lg$. The deformed bracket $[\cdot,\cdot]_t$ on the semi-direct product is then
$$[(h,(g_1,g_2)),(h',(g_1',g_2'))]_t=([g_1,h']-[g',h],([g_1,g_1']+t[h,h'],[g_2,g_2'])).$$
It is clear that on the first two factors of the resulting $\Lg\oplus\Lg\oplus\Lg$, we have the identity crossed module and can therefore use the isomorphism with the direct product in order to show that the deformed Lie algebra
$(\Lg\oplus\Lg\oplus\Lg,[\cdot,\cdot]_t)$ is isomorphic to the direct product Lie algebra $\Lg\oplus\Lg\oplus\Lg$.
\end{ex}


More generally, we have the following result on crossed module between semi-simple Lie algebras.


\begin{prop}
A crossed module $\mu:\Lh\to\Lg$ between semi-simple Lie algebras $\Lh$ and $\Lg$ is the sum of crossed modules of the kind $0:V\to\Lk$, $\{0\}\hookrightarrow\Lk$  and of crossed modules where $\mu$ is an isomorphism.
\end{prop}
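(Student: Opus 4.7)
The plan is to show that for a crossed module $\mu:\Lh\to\Lg$ between semi-simple Lie algebras, $\mu$ is automatically injective with image an ideal of $\Lg$, and that a complement to this ideal acts trivially on $\Lh$. This yields the advertised decomposition; under these hypotheses a summand of type $0:V\to\Lk$ is forced to have $V=0$ (it would otherwise produce an abelian ideal of $\Lh$), so only the isomorphism type and the inclusion-of-zero type genuinely occur, and the first type is listed merely to cover the (vacuous) zero piece.

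First I would use axiom (b), $\mu(h)\cdot h'=[h,h']$, to see that $\ker\mu$ is central in $\Lh$: for $h\in\ker\mu$ we get $[h,h']=\mu(h)\cdot h'=0$ for every $h'\in\Lh$, so $\ker\mu\subseteq Z(\Lh)$. Semi-simplicity of $\Lh$ gives $Z(\Lh)=0$, hence $\mu$ is injective.

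Next, axiom (a), $\mu(g\cdot h)=[g,\mu(h)]$, shows that $\im\mu$ is an ideal of $\Lg$. Semi-simplicity of $\Lg$ then produces a complementary ideal $\Lk$ with $\Lg=\im\mu\oplus\Lk$ and $[\im\mu,\Lk]=0$. For any $k\in\Lk$ and $h\in\Lh$, axiom (a) applied again gives $\mu(k\cdot h)=[k,\mu(h)]=0$; injectivity of $\mu$ then forces $k\cdot h=0$, so $\Lk$ acts trivially on $\Lh$.

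Putting the pieces together, $\mu:\Lh\to\Lg$ splits as the direct sum of the crossed module $\mu:\Lh\xrightarrow{\sim}\im\mu$ (an isomorphism, the action on $\Lh$ being governed by (b)) and of the crossed module $\{0\}\hookrightarrow\Lk$ (with trivial action, by the previous step). The only delicate point is checking that this is genuinely a splitting of crossed modules, but this is immediate once we know that $\Lk$ acts as zero while $\im\mu$ acts through $\mu$ via the bracket of $\Lh$. The whole argument really hinges on extracting injectivity of $\mu$ from axiom (b) combined with $Z(\Lh)=0$; once that is done, standard structure theory of semi-simple Lie algebras takes care of the rest, so I do not anticipate a genuine obstacle.
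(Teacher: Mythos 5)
Your proof is correct, and it sharpens the paper's argument rather than merely reproducing it. The paper never invokes the centrality of $\ker\mu$: it only observes that $\ker\mu$ is an ideal of the semi-simple algebra $\Lh$, splits off the corresponding simple factors and files them away as crossed modules of type $0:V\to\Lk$, and then uses that $\im\mu$ is an ideal of $\Lg$ to discard complementary simple factors of $\Lg$ and conclude that what remains is an isomorphism. You instead extract from axiom (b) that $\ker\mu\subseteq Z(\Lh)=0$ (a fact the paper uses elsewhere but not here), so $\mu$ is injective outright; then $[\Lk,\im\mu]\subseteq\Lk\cap\im\mu=0$ together with injectivity gives $\Lk\cdot\Lh=0$, and the crossed module splits as $(\mu:\Lh\stackrel{\sim}{\to}\im\mu)\oplus(\{0\}\hookrightarrow\Lk)$. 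Your route buys two things the paper leaves implicit: first, the explicit verification that the complementary ideal $\Lk$ acts trivially on $\Lh$, which is exactly what is needed for the decomposition to be a direct sum of \emph{crossed modules} (the paper's restriction step silently assumes this); second, the observation that summands $0:V\to\Lk$ with $V\neq 0$ cannot occur when $\Lh$ is semi-simple, since axiom (b) forces such a $V$ to be an abelian ideal of $\Lh$. This is consistent with the statement, which merely allows such summands, and it explains why they only genuinely appear in the paper's subsequent remark, where just one of $\Lh$, $\Lg$ is assumed semi-simple.
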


\begin{proof}
Indeed, the kernel of $\mu$ must be an ideal, thus up to restriction onto a certain number of simple factors of $\Lh$, we can suppose that $\mu$ is injective. The restriction process does not alter the situation, we have just excluded a certain number of crossed modules of the form $0:V\to\Lg$ where $V$ is a $\Lg$-module.
But the image of $\mu$ must be also an ideal, thus, up to excluding a certain number of factors of the semi-simple Lie algebra $\Lg$, $\mu$ is surjective.
\end{proof}

In fact, a crossed module $\mu:\Lh\to\Lg$ where $\Lh$ is semi-simple is, up to restriction onto some simple factors, injective and thus the sum of crossed modules of the kind $0:V\to\Lk$ and of crossed modules where $\mu$ is an isomorphism. In the same vein, a crossed module $\mu:\Lh\to\Lg$ where $\Lg$ is semi-simple is, up to restriction to a certain number of simple factors in $\Lg$, surjective, thus a central extension. But this central extension splits by semi-simplicity, thus again, it is the sum of crossed modules of the kind $0:V\to\Lk$ and of crossed modules where $\mu$ is an isomorphism.

Observe that the crossed modules of the type $0:V\to\Lg$ for a $\Lg$-module $V$ give rise to the usual semi-direct product and they are thus not isomorphic to the direct product, if the action is non-trivial.

\begin{ex}
Let $\Lr_2=\langle x,y\,|\,[x,y]=x\rangle$ be the $2$-dimensional solvable (non nilpotent) Lie algebra. Consider the deformed algebra $\Lr_2\times^\mu_t(\Lr_2\oplus\Lr_2)$, where $\mu:\Lr_2\to\Lr_2\oplus\Lr_2$ is the crossed module given by the inclusion map into the first factor of the direct product. In order to see the isomorphism with the direct product, we choose the basis $X_1=(x,(0,0))$, $X_2=(y,(0,0))$, $X_3=(0,(x,0))$, $X_4=(0,(y,0))$, $X_5=(0,(0,x))$, $X_6=(0,(0,y))$. The commutation relations for the deformed Lie algebra (with $t=1$) read
$$[X_1,X_2]=X_3,\,\,[X_2,X_3]=-X_1,\,\,[X_1,X_4]=X_1,\,\,[X_3,X_4]=X_3,\,\,[X_5,X_6]=X_5.$$
The commutation relations for the direct product read simply $[X_1,X_2]=X_1$, $[X_3,X_4]=X_3$, $[X_5,X_6]=X_5$. When explicitly writing out the isomorphism $\psi=\psi_1$ from Section \ref{ss:def_cont}, one obtains
$$\psi(X_1)=X_3-X_1,\,\,\psi(X_2)=X_4-X_2,\,\,\psi(X_3)=X_1+X_3,\,\,\psi(X_4)=X_2+X_4,$$
while $\psi(X_5)=X_5$ and $\psi(X_6)=X_6$. One computes that (like in Section \ref{ss:def_cont}) this linear map is an isomorphism from the deformed Lie algebra to the direct product Lie algebra.
\end{ex}

\begin{prop}
In case the map $\mu$ in the crossed module $\mu: \Lh \to \Lg$ is surjective then 
$(\Lh\oplus(\Lh/\ker(\mu)),[\cdot,\cdot]_t')$ is isomorphic to the deformed Lie algebra
$(\Lh\oplus\Lg,[\cdot,\cdot]_t)$, where 
$$[(h,\overline{k}),(h',\overline{k'})]_t':= \left(k\cdot h'-k'\cdot h,\left[\overline{k},\overline{k'}\right]+ t\left[\overline{h},\overline{h'}\right]\right)$$
for all $h, h', k, k' \in \Lh$.
\end{prop}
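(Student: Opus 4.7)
The plan is to reduce to the previous lemma on functoriality of the deformed bracket by recognizing $\mu : \Lh \to \Lg$ as isomorphic, as a crossed module, to the canonical projection $\mu' : \Lh \to \Lh/\ker(\mu)$. Since $\mu$ is surjective, the first isomorphism theorem for Lie algebras supplies an isomorphism $\bar\mu : \Lh/\ker(\mu) \to \Lg$, $\bar k \mapsto \mu(k)$.

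First, I would verify that $\mu' : \Lh \to \Lh/\ker(\mu)$, equipped with the action $\bar k \cdot h := [k,h]$ of $\Lh/\ker(\mu)$ on $\Lh$, is a well-defined crossed module. The key point is that the action does not depend on the chosen representative: by axiom (b) of the original crossed module, if $\mu(k)=0$ then $[k,h] = \mu(k)\cdot h = 0$ for all $h\in\Lh$, so $\ker(\mu)$ is central in $\Lh$; this also ensures $\ker(\mu)$ is an ideal. The crossed module axioms for $\mu'$ are then immediate: (a) reads $\mu'(\bar g\cdot h) = \overline{[g,h]} = [\bar g,\bar h] = [\bar g,\mu'(h)]$, and (b) reads $\mu'(h)\cdot h' = \bar h\cdot h' = [h,h']$.

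Next, I would check that the pair $(\id_\Lh,\bar\mu) : (\mu':\Lh\to\Lh/\ker(\mu)) \to (\mu:\Lh\to\Lg)$ is a morphism (in fact an isomorphism) of crossed modules. Commutativity of the square, $\bar\mu\circ\mu' = \mu\circ\id_\Lh$, holds by the very definition of $\bar\mu$. Compatibility with the actions amounts to
$$\id_\Lh(\bar g\cdot h) = \bar\mu(\bar g)\cdot \id_\Lh(h),$$
i.e.\ $[g,h] = \mu(g)\cdot h$, which is axiom (b) for $\mu$. Since $\id_\Lh$ and $\bar\mu$ are Lie algebra isomorphisms, this is an isomorphism of crossed modules.

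Finally, applying the covariance lemma for the deformed bracket from Section 3.3 (whose proof extends verbatim from $t=1$ to any scalar $t$), the map $(\id_\Lh,\bar\mu)$ sends the bracket $[\cdot,\cdot]_t'$ on $\Lh\oplus(\Lh/\ker(\mu))$ to the bracket $[\cdot,\cdot]_t$ on $\Lh\oplus\Lg$; being bijective, it is the desired isomorphism of Lie algebras. The only real point of care in the whole argument is checking that the quotient together with the induced adjoint-type action genuinely defines a crossed module, and this in turn reduces to the centrality of $\ker(\mu)$ inside $\Lh$ deduced from axiom (b).
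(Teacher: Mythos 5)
Your proof is correct, and it produces exactly the isomorphism the paper uses, namely $\varphi=\id_\Lh+\overline{\mu}$ obtained from the first isomorphism theorem; the only real difference is how the bracket-preservation is justified. The paper simply notes that $[\cdot,\cdot]_t'$ is well defined because $\ker(\mu)\subseteq Z(\Lh)$ and then says the verification that $\varphi$ is a Lie algebra morphism is the same direct computation as in the case where $\mu$ is an isomorphism (Proposition \ref{Prop.iso}). You instead make the structure explicit: you check that the quotient map $\mu'\colon\Lh\to\Lh/\ker(\mu)$, with the induced action $\overline{k}\cdot h=[k,h]$, is itself a crossed module (the centrality of $\ker(\mu)$, forced by axiom (b), giving well-definedness), observe that $[\cdot,\cdot]_t'$ is precisely its deformed bracket, verify that $(\id_\Lh,\overline{\mu})$ is an isomorphism of crossed modules, and then invoke the covariance lemma of Section 3.3 (valid verbatim for arbitrary $t$) to transport the bracket. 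This buys a slightly more conceptual and reusable argument -- the computation is done once, in the functoriality lemma -- at the cost of the extra (routine) verification that the quotient really is a crossed module; the paper's route is shorter because it leans on the computation already displayed in Proposition \ref{Prop.iso}. Either way the key input is the same: $\ker(\mu)\subseteq Z(\Lh)$ and axiom (b) identifying $\mu(k)\cdot h'$ with $[k,h']$.
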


\begin{proof}
We first observe that $[\cdot,\cdot]_t'$ is well defined since $\ker(\mu) \subseteq \Lz(\Lh)$. 
On the other hand, since $\mu$ is an epimorphism there exists an isomorphism $\overline{\mu}: \Lh/\ker(\mu) \to \Lg$ such that 
$\overline{\mu}(\overline{h})= \mu(h)$ for all $h \in \Lh$.

The linear map $\varphi:=\id_\Lh+\overline{\mu}:(\Lh\oplus(\Lh/\ker(\mu)),[\cdot,\cdot]_t') \to  (\Lh\oplus\Lg,[\cdot,\cdot]_t)$ is an isomorphism of Lie algebras and the proof is similar to the one given in Proposition \ref{Prop.iso}, which completes the proof.
\end{proof}

\subsection{Identifying the deformed Lie algebra using the center}

Let us compute the center of the deformed Lie algebra $\Lh\times^\mu_t\Lg:=(\Lh\oplus\Lg,[\cdot,\cdot]_t)$ where
$$[(h,g),(h',g')]_t=(g\cdot h'-g'\cdot h,[g,g']+t\mu([h,h'])).$$

\begin{prop}\label{centerdeformed}
The center $Z(\Lh\times^\mu_t\Lg)$ of $\Lh\times^\mu_t\Lg$ is given by
$$\{(h,g)\,|\,g'\cdot h=0\ \forall g'\in\Lg,\ [h,h']\in\ker(\mu)\ \forall h'\in\Lh,\ g\cdot h'=0\ \forall h'\in\Lh,\ g\in Z(\Lg)\}.$$
\end{prop}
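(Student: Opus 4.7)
The plan is to unpack the centrality condition $[(h,g),(h',g')]_t = 0$ for all $(h',g') \in \Lh \oplus \Lg$ directly from the definition, and then peel off the four listed conditions by specializing $h'$ or $g'$ to zero one at a time. Concretely, $(h,g) \in Z(\Lh\times^\mu_t\Lg)$ is equivalent to the simultaneous vanishing
$$g\cdot h' - g'\cdot h = 0 \quad \text{and} \quad [g,g'] + t\mu([h,h']) = 0$$
for all $h' \in \Lh$ and $g' \in \Lg$.

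First I would handle the first (i.e., $\Lh$-valued) equation. Since the two summands $g\cdot h'$ and $g'\cdot h$ depend on disjoint pairs of variables, setting $g'=0$ yields $g\cdot h'=0$ for all $h' \in \Lh$, while setting $h'=0$ yields $g'\cdot h = 0$ for all $g' \in \Lg$; conversely these two conditions imply the equation in full. Next, for the second ($\Lg$-valued) equation, the two summands $[g,g']$ and $t\mu([h,h'])$ again depend on disjoint variables, so setting $h'=0$ yields $[g,g']=0$ for all $g' \in \Lg$, which says $g \in Z(\Lg)$, and setting $g'=0$ yields $t\mu([h,h'])=0$ for all $h' \in \Lh$. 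Assuming $t\ne 0$ (which is the relevant deformation regime), this last condition is equivalent to $[h,h'] \in \ker(\mu)$ for all $h' \in \Lh$. The converse implication, that the four displayed conditions together force $[(h,g),(h',g')]_t=0$, is immediate by substitution.

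I do not expect any obstacle: the verification is a clean decoupling argument, made possible because in each of the two components the two terms involve disjoint input variables, so specializing isolates them. The only mild subtlety worth flagging explicitly in the write-up is the dependence on $t$, namely that $t\mu([h,h'])=0$ for all $h'$ is equivalent to $[h,h']\in\ker\mu$ precisely when $t\ne 0$.
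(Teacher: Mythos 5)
Your proposal is correct and follows essentially the same route as the paper: evaluate $[(h,g),(h',g')]_t=(0,0)$ against the special elements $(h',0)$ and $(0,g')$ to decouple the four conditions. Your explicit remark that $t\mu([h,h'])=0$ for all $h'$ amounts to $[h,h']\in\ker\mu$ only when $t\neq 0$ is a harmless clarification the paper leaves implicit.
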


\begin{proof}
Consider for a fixed pair $(h,g)\in\Lh\oplus\Lg$ the condition
$$[(h,g),(h',g')]_t=(g\cdot h'-g'\cdot h,[g,g']+t\mu([h,h']))=(0,0)$$
for all $(h',g')\in\Lh\oplus\Lg$. Taking first $(h',g')=(h',0)$, we get $g\cdot h'=0\ \forall h'\in\Lh$ in the first component and $[h,h']\in\ker(\mu)$ in the second component. Taking then $(h',g')=(0,g')$, we get $g'\cdot h=0\ \forall g'\in\Lg$ in the first component and $g\in Z(\Lg)$ in the second component.
\end{proof}

The next example shows that the direct product Lie algebra and the deformed Lie algebra could be non-isomorphic even for $\mu$ injective.

\begin{ex}
Let $\mathfrak{g}= \langle e_1, e_2, e_3, \dots, e_{n}\rangle$ defined by $[e_1, e_i]= e_i$ for $i=4, \dots, n$ and $[e_2, e_3]= e_3$. Consider the subalgebra $\mathfrak{h}= \langle e_1, e_3, \dots, e_{n}\rangle$ of $\mathfrak{g}$ and the crossed module given by the inclusion $i: \Lh \to \Lg$ of the ideal $\Lh$ into  $\Lg$.

It is easy to see that the center of the direct product Lie algebra $\Lh \oplus \Lg$ is $\langle e_3 \rangle \oplus 0$ but the center of the deformed Lie algebra $(\Lh\oplus\Lg,[\cdot,\cdot]_t)$ is $0$ for all $t$.
\end{ex}

The following example shows that $[\cdot,\cdot]_t$ may not be isomorphic to the direct product Lie algebra if $\mu$ is surjective.

\begin{ex}
Let $\Lh= \langle x_1, x_2, x_3, x_{12}, x_{13}, x_{23}\,|\,[x_i, x_j]= x_{ij} \text{ with } 1 \leq i < j \leq 3 \rangle$ be the free $2$-step nilpotent Lie algebra of rank $3$ and let $I= \langle x_{12}, x_{13} \rangle$ be an ideal of $\Lh$.
Consider the deformed algebra $\Lh \times^\mu_t (\Lh/I)$, where $\mu: \Lh \to \Lh/I$ is the crossed module given by the quotient map.

It is clear that the
$$
\Lz((\Lh \oplus (\Lh/I), [\cdot,\cdot]_{\oplus}))= \langle (x_{12}, 0), (x_{13}, 0), (x_{23}, 0), (0, \mu(x_1)), (0, \mu(x_{23}))\rangle
$$
and from Proposition \ref{centerdeformed} we obtain
$$
\Lz((\Lh \oplus (\Lh/I), [\cdot,\cdot]_{t}))= \langle (x_{12}, 0), (x_{13}, 0), (x_{23}, 0), (0, \mu(x_{23}))\rangle.
$$
It follows that the deformed Lie algebra $\Lh \times^\mu_t (\Lh/I)$ is not isomorphic to the direct product Lie algebra
$\Lh \oplus (\Lh/I)$ for all $t$.
\end{ex}

Consider now the crossed modules of the kind $\ad:\Lg\to\Der(\Lg)$. For this kind of crossed module, the previous proposition specializes to

\begin{cor}  \label{cor_center}
Let $\Lg\times^\ad_t\Der(\Lg)$ be the deformed Lie algebra corresponding a crossed module of the form $\ad:\Lg\to\Der(\Lg)$. Then the center is given by
$$Z(\Lg\times^\ad_t\Der(\Lg))=\{(h,0)\,|\,h\in\bigcap_{D\in\Der(\Lg)}\ker(D),\,\,\,\ad_{[h,h']}=0\ \forall h'\in\Lg\}.$$
In particular, we have
$$Z(\Lg\times^\ad_t\Der(\Lg))\subset Z(\Lg)\oplus\{0\}.$$
\end{cor}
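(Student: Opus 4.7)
The plan is to specialize Proposition \ref{centerdeformed} directly to the crossed module $\ad:\Lg\to\Der(\Lg)$. Here the source $\Lh$ of the proposition is played by $\Lg$, the target is played by $\Der(\Lg)$, and the action is evaluation $D\cdot h = D(h)$. Translating the four defining conditions on an element $(h,D)\in\Lg\oplus\Der(\Lg)$ of the center, one obtains: first, $D'(h)=0$ for every $D'\in\Der(\Lg)$, which is exactly $h\in\bigcap_{D'\in\Der(\Lg)}\ker(D')$; second, $[h,h']\in\ker(\ad)=Z(\Lg)$ for every $h'\in\Lg$, i.e.\ $\ad_{[h,h']}=0$; third, $D(h')=0$ for every $h'\in\Lg$; and fourth, $D\in Z(\Der(\Lg))$.

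The key observation is that the third condition already forces $D=0$ as a linear endomorphism of $\Lg$, so the fourth condition becomes vacuous and the whole center automatically lies inside $\Lg\oplus\{0\}$. The two remaining conditions on $h$ are precisely those appearing in the statement, which yields the claimed description of $Z(\Lg\times^\ad_t\Der(\Lg))$.

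For the final inclusion $Z(\Lg\times^\ad_t\Der(\Lg))\subset Z(\Lg)\oplus\{0\}$, I would specialize the first condition to the inner derivations $D'=\ad_x\in\Der(\Lg)$ for arbitrary $x\in\Lg$; this yields $[x,h]=\ad_x(h)=0$ for all $x\in\Lg$, hence $h\in Z(\Lg)$.

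There is no real obstacle in the argument, as everything reduces to a substitution into Proposition \ref{centerdeformed}. The only point requiring some care is bookkeeping: both copies of the Lie algebra appearing in the deformed bracket are related to $\Lg$, but one plays the role of the module/source $\Lh$ (under $\ad$) while the other plays the role of the Lie algebra target $\Der(\Lg)$, and the two act on each other asymmetrically.
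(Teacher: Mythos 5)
Your proof is correct and follows essentially the same route as the paper: specialize Proposition \ref{centerdeformed} to $\ad:\Lg\to\Der(\Lg)$, note that the condition $D\cdot h'=D(h')=0$ for all $h'\in\Lg$ forces $D=0$, and obtain the inclusion into $Z(\Lg)\oplus\{0\}$ by restricting the condition $h\in\bigcap_{D'\in\Der(\Lg)}\ker(D')$ to the inner derivations $\ad_x$. Nothing is missing.
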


\begin{proof}
For the first statement, it suffices to see that for $(h,D)\in\Lg\oplus\Der(\Lg)$, we have $D\cdot h=D(h)$ by definition of the action in the crossed module $\ad:\Lg\to\Der(\Lg)$. Thus $D\cdot h=0$ for all $h\in\Lg$ means that $D=0$.

For the second statement, it is enough to see that
$$h\in\bigcap_{D\in\Der(\Lg)}\ker(D)\subset\bigcap_{\ad_g\in\Der(\Lg)}\ker(\ad_g)=Z(\Lg).$$
\end{proof}

\begin{cor}  \label{cor_criterion_center}
Let $\Lg\times^\ad_t\Der(\Lg)$ be the deformed Lie algebra corresponding a crossed module of the form $\ad:\Lg\to\Der(\Lg)$. Suppose that $Z(\Der(\Lg))\not=0$. Then $\Lg\times^\ad_t\Der(\Lg)$ is not isomorphic to the direct product Lie algebra $\Lg\oplus\Der(\Lg)$.
\end{cor}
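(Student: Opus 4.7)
The plan is to argue by comparing dimensions of centers, which are invariant under Lie algebra isomorphism.

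First, I would compute the center of the direct product: since the bracket on $\Lg\oplus\Der(\Lg)$ is componentwise, we immediately get
$$Z(\Lg\oplus\Der(\Lg))=Z(\Lg)\oplus Z(\Der(\Lg)),$$
so $\dim Z(\Lg\oplus\Der(\Lg))=\dim Z(\Lg)+\dim Z(\Der(\Lg))$.

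Next, I would invoke the second statement of Corollary \ref{cor_center}, which tells us
$$Z(\Lg\times^{\ad}_t\Der(\Lg))\subset Z(\Lg)\oplus\{0\},$$
so $\dim Z(\Lg\times^{\ad}_t\Der(\Lg))\leq\dim Z(\Lg)$. Under the hypothesis $Z(\Der(\Lg))\neq 0$, we then have
$$\dim Z(\Lg\times^{\ad}_t\Der(\Lg))\leq\dim Z(\Lg)<\dim Z(\Lg)+\dim Z(\Der(\Lg))=\dim Z(\Lg\oplus\Der(\Lg)).$$
Since the dimension of the center is an isomorphism invariant, the two Lie algebras cannot be isomorphic.

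There is essentially no obstacle: all the work has been done in Corollary \ref{cor_center}, and this corollary is a direct dimension-counting consequence. The only thing to be mildly careful about is that the inclusion in Corollary \ref{cor_center} is strict enough to give the bound $\dim Z \leq \dim Z(\Lg)$, which it clearly is since the second component of any central element of the deformed algebra vanishes.
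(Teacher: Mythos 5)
Your proof is correct and follows essentially the same route as the paper: both compare the centers, using the containment $Z(\Lg\times^{\ad}_t\Der(\Lg))\subset Z(\Lg)\oplus\{0\}$ from Corollary \ref{cor_center} against $Z(\Lg\oplus\Der(\Lg))=Z(\Lg)\oplus Z(\Der(\Lg))$, so that $Z(\Der(\Lg))\neq 0$ forces a strict dimension gap. Your write-up merely makes explicit the dimension count that the paper leaves implicit.
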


\begin{proof}
Indeed, the center of the direct product Lie algebra $\Lg\oplus\Der(\Lg)$ is
$$Z(\Lg\oplus\Der(\Lg))=Z(\Lg)\oplus Z(\Der(\Lg)),$$
which is not isomorphic to $Z(\Lg\times^\ad_t\Der(\Lg))$ under the stated hypothesis.
\end{proof}

\medskip
 
We note the following example where we show that the center $Z(\Lg)$ may be strictly bigger than the space $\bigcap_{D\in\Der(\Lg)}\ker(D)$.

\begin{ex}
Let $\Lg$ be a $2$-step nilpotent Lie algebra with derived ideal $\Lg'\subset Z(\Lg)$, but $\Lg'\not= Z(\Lg)$. Thus $\Lg$ has an abelian factor. Let $\Lg=\langle e_1,\ldots,e_n\rangle\oplus\langle v_1,\ldots,v_m\rangle$ where $\langle e_1,\ldots,e_n\rangle=Z(\Lg)$, $\langle v_1,\ldots,v_m\rangle=:\nu$ with $[\nu,\nu]=\Lg'$ and where
$e_1\in Z(\Lg)$ is not in $\Lg'$. Let $D$ be the endomorphism of $\Lg$ which is $1$ on $e_1$ and zero on all other elements. Then $D$ is a derivation (because $e_1\not\in\Lg'$ and $e_1\in Z(\Lg)$) such that $e_1\not\in\ker(D)$, but $e_1\in Z(\Lg)$. Therefore we have a strict inclusion
$$\bigcap_{D\in\Der(\Lg)}\ker(D)\subset\bigcap_{g\in\Lg}\ker(\ad_g)=Z(\Lg).$$
\end{ex}

\begin{lem}\label{center2step}
Let $\mathfrak{g}:= [\mathfrak{g}, \mathfrak{g}] \oplus \mathcal{V}$ be a $2$-step nilpotent Lie algebra, then
$$D_0= \left(\begin{array}{cc} 2 I & 0 \\ 0 & I \end{array}\right),$$
is a derivation of $\mathfrak{g}$.
\end{lem}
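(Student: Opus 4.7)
The plan is to verify the Leibniz identity $D_0([x,y]) = [D_0(x),y] + [x,D_0(y)]$ for all $x,y \in \mathfrak{g}$ by splitting into cases according to the direct sum decomposition $\mathfrak{g} = [\mathfrak{g},\mathfrak{g}] \oplus \mathcal{V}$, and then extending by bilinearity. The crucial structural input is that $\mathfrak{g}$ is $2$-step nilpotent, which means $[\mathfrak{g},\mathfrak{g}] \subseteq Z(\mathfrak{g})$; in particular, any bracket with one argument in $[\mathfrak{g},\mathfrak{g}]$ vanishes, and any bracket of two elements in $\mathcal{V}$ lands inside $[\mathfrak{g},\mathfrak{g}]$, where $D_0$ acts by the scalar $2$.

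First I would treat the generic case $x, y \in \mathcal{V}$. Then $[x,y] \in [\mathfrak{g},\mathfrak{g}]$, so $D_0([x,y]) = 2[x,y]$, while $D_0(x) = x$ and $D_0(y) = y$ give $[D_0(x),y] + [x,D_0(y)] = [x,y] + [x,y] = 2[x,y]$; the Leibniz identity holds. Next I would handle the mixed case where (say) $x \in [\mathfrak{g},\mathfrak{g}]$ and $y \in \mathcal{V}$: here $[x,y] = 0$ because $x$ is central, so $D_0([x,y]) = 0$, and similarly $[D_0(x),y] + [x,D_0(y)] = [2x,y] + [x,y] = 0 + 0 = 0$. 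Finally, for $x, y \in [\mathfrak{g},\mathfrak{g}]$, both sides vanish for exactly the same centrality reason.

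Since these three cases exhaust the pairs of elements from the two summands and both sides of the Leibniz identity are bilinear in $(x,y)$, the identity holds on all of $\mathfrak{g}$, so $D_0$ is a derivation.

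There is essentially no obstacle here: the statement is a direct consequence of two-step nilpotency together with the fact that $D_0$ is constant on each of the two summands, with the scalar on $[\mathfrak{g},\mathfrak{g}]$ being precisely twice the scalar on $\mathcal{V}$, which is the exact arithmetic needed to match the two terms on the right-hand side of the Leibniz rule in the only nontrivial case.
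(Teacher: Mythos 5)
Your proof is correct and follows essentially the same route as the paper: both arguments reduce everything to the fact that two-step nilpotency makes $[\mathfrak{g},\mathfrak{g}]$ central, so the only surviving bracket comes from the $\mathcal{V}$-components, where the scalar $2$ on $[\mathfrak{g},\mathfrak{g}]$ matches the two terms of the Leibniz rule. The paper simply expands general elements $x=x_1+x_2$, $y=y_1+y_2$ in one computation rather than splitting into cases and invoking bilinearity, which is an equivalent presentation of the same verification.
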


\begin{proof}
Let $x, y \in \mathfrak{g}$, then there exist $x_1, y_1 \in [\mathfrak{g}, \mathfrak{g}]$ and $x_2, y_2 \in \mathcal{V}$ such that
$$
x= x_1 + x_2 \;\;\; \text{ and } y= y_1 + y_2.
$$
Since $[x, y]= [x_2, y_2]$, we obtain $D_0[x, y]= 2 [x_2, y_2]$. On the other hand
\begin{itemize}
\item $[D_0(x), y]= [2 x_1 + x_2, y_1 + y_2]= [x_2, y_2]$
\item $[x, D_0(y)]= [x_1 + x_2, 2y_1 + y_2]= [x_2, y_2]$
\end{itemize}
This completes the proof.
\end{proof}

\begin{lem}\label{centertrivial}
Let $\mathfrak{g}$ be a $2$-step nilpotent Lie algebra and
let $\Lg\times^\ad_t\Der(\Lg)$ be the deformed Lie algebra corresponding a crossed module of the form $\ad:\Lg\to\Der(\Lg)$. Then
$Z(\Lg\times^\ad_t\Der(\Lg))= 0$.
\end{lem}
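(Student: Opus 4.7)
The plan is to combine the characterization of the center from Corollary \ref{cor_center} with the explicit derivation constructed in Lemma \ref{center2step}. By Corollary \ref{cor_center}, every element of $Z(\Lg\times^\ad_t\Der(\Lg))$ has the form $(h,0)$ with
$$h\in\bigcap_{D\in\Der(\Lg)}\ker(D),$$
so it suffices to show that this intersection is reduced to $\{0\}$ whenever $\Lg$ is $2$-step nilpotent.

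To do so, I would first fix a vector space decomposition $\Lg=[\Lg,\Lg]\oplus\mathcal{V}$ (which always exists) and consider the endomorphism $D_0$ from Lemma \ref{center2step}, acting as multiplication by $2$ on $[\Lg,\Lg]$ and by $1$ on $\mathcal{V}$. That lemma already guarantees $D_0\in\Der(\Lg)$, so $\bigcap_{D\in\Der(\Lg)}\ker(D)\subset\ker(D_0)$.

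Next, I would simply observe that $D_0$ is an invertible linear map: in block form
$$D_0=\left(\begin{array}{cc} 2\,\id & 0 \\ 0 & \id \end{array}\right),$$
both diagonal blocks are non-zero scalar multiples of the identity, hence $\ker(D_0)=\{0\}$. This forces $\bigcap_{D\in\Der(\Lg)}\ker(D)=\{0\}$, and therefore $Z(\Lg\times^\ad_t\Der(\Lg))=\{0\}$ by Corollary \ref{cor_center}. (Note that if $\Lg$ is abelian, the same argument degenerates to $D_0=\id$, which is still injective, so the argument covers that boundary case as well.)

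There is essentially no obstacle here: the only thing to check is that the derivation $D_0$ already provided by Lemma \ref{center2step} has trivial kernel, which is immediate from its diagonal scalar form. The real content of the lemma has been isolated in Corollary \ref{cor_center} and Lemma \ref{center2step}, and the present statement follows by a one-line combination of the two.
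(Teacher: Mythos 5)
Your proposal is correct and takes essentially the same approach as the paper, whose proof is literally the one-line statement that the lemma follows from Corollary \ref{cor_center} and Lemma \ref{center2step}. You have merely made explicit the step the paper leaves implicit, namely that the derivation $D_0$ is invertible, so that $\bigcap_{D\in\Der(\Lg)}\ker(D)=\{0\}$.
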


\begin{proof}
This follows from Corollary \ref{cor_center} and Lemma \ref{center2step}.
\end{proof}

\begin{thm}\label{Teo 2 step}
Let $\mathfrak{g}$ be a $2$-step nilpotent Lie algebra and let $\Lg\times^\ad_t\Der(\Lg)$ be the deformed Lie algebra corresponding a crossed module of the form $\ad:\Lg\to\Der(\Lg)$. Then $\Lg\times^\ad_t\Der(\Lg)$ is not isomorphic to the direct product Lie algebra $\Lg\oplus\Der(\Lg)$.
\end{thm}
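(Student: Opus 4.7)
The plan is to distinguish the deformed Lie algebra $\Lg\times^\ad_t\Der(\Lg)$ from the direct product $\Lg\oplus\Der(\Lg)$ by computing (or bounding) their centers. The heavy lifting has already been done by the two lemmas immediately preceding the theorem: Lemma \ref{center2step} exhibits the explicit derivation $D_0$ acting as $2\id$ on $[\Lg,\Lg]$ and as $\id$ on a complement $\mathcal{V}$, and Lemma \ref{centertrivial} then uses it together with Corollary \ref{cor_center} to conclude that $Z(\Lg\times^\ad_t\Der(\Lg))=0$.

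Given that vanishing, the only thing left to establish is that the center of the direct product Lie algebra is non-zero. First I would invoke the standard identity
\[Z(\Lg\oplus\Der(\Lg))=Z(\Lg)\oplus Z(\Der(\Lg)),\]
which reduces the matter to showing $Z(\Lg)\neq 0$. Then I would argue by the two possible cases for a $2$-step nilpotent Lie algebra $\Lg\neq 0$: if $\Lg$ is non-abelian, the $2$-step nilpotency condition $[\Lg,[\Lg,\Lg]]=0$ gives $[\Lg,\Lg]\subseteq Z(\Lg)$, and since $[\Lg,\Lg]\neq 0$ in that case, $Z(\Lg)\neq 0$; if $\Lg$ is abelian (assuming one admits this as $2$-step), then trivially $Z(\Lg)=\Lg\neq 0$. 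Either way, $Z(\Lg\oplus\Der(\Lg))\neq 0$, so the two Lie algebras have centers of different dimensions and cannot be isomorphic.

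The main obstacle is really upstream of this theorem, inside Lemma \ref{centertrivial}: one must know that $\bigcap_{D\in\Der(\Lg)}\ker(D)=0$, and this is precisely where the explicit derivation $D_0$ of Lemma \ref{center2step} is needed, since its kernel is already $0$. Once $Z(\Lg\times^\ad_t\Der(\Lg))=0$ is in hand, the argument above is essentially a one-line center comparison, and the theorem drops out.
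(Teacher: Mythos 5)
Your proposal is correct and follows essentially the same route as the paper: the paper's proof also reduces the theorem to the center comparison, citing Lemma \ref{centertrivial} (via Corollary \ref{cor_center} and the derivation $D_0$ of Lemma \ref{center2step}) for $Z(\Lg\times^\ad_t\Der(\Lg))=0$, against the nontrivial center $Z(\Lg)\oplus Z(\Der(\Lg))\supseteq Z(\Lg)\neq 0$ of the direct product. Your write-up merely makes explicit the step $[\Lg,\Lg]\subseteq Z(\Lg)$ (or $Z(\Lg)=\Lg$ in the abelian case), which the paper leaves implicit.
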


\begin{proof}
This follows from Corollary \ref{cor_center} and Lemma \ref{centertrivial}.
\end{proof}

\begin{rem}
From Theorem \ref{Teo 2 step}, we obtain that $\Lg\times^\ad_t\Der(\Lg)$ is not a nilpotent Lie algebra.
\end{rem}

\begin{ex}
Let $\Lh_{2n+1}$ be the Heisenberg Lie algebra. Consider the crossed module $\ad:\Lh_{2n+1}\to\Der(\Lh_{2n+1})$. The associated deformed Lie algebra is just the semi-direct product Lie algebra, because the cocycle $c((h,g),(h',g'))=(0,\ad_{[h,h']})$ for all $h,h'\in\Lh_{2n+1}$ is trivial (as $[h,h']$ is central). Let us show that semi-direct product (i.e. the deformed algebra) $\Lh_{2n+1}\rtimes\Der(\Lh_{2n+1})$ and direct product $\Lh_{2n+1}\oplus\Der(\Lh_{2n+1})$ are not isomorphic.

The derivations $\Der(\Lh_{2n+1})$ of $\Lh_{2n+1}$ are all of the matrix form (see for instance \cite{GK})
$$\left(\begin{array}{cc} A & 0 \\ * & \tr(A) \end{array}\right),$$
where $A$ is of size $2n\times 2n$. We therefore obtain derivations $D_i=E_{ii}+E_{2n+1, 2n+1}$. The kernel of $D_i$ consists of elements $\,^t(x_1,\ldots,x_{i-1},0,x_{i+1},\ldots,x_{2n},0)$ and is of dimension $2n-1$. The intersection of the kernels of all derivations $D_i$ for $i=1,\ldots,2n$ is zero. Therefore we obtain using Corollary \ref{cor_center}
$$Z(\Lh_{2n+1}\rtimes\Der(\Lh_{2n+1}))=\{0\}.$$
On the other hand, the center of $\Lh_{2n+1}$ is $1$-dimensional, and thus direct product and semi-direct product cannot be isomorphic.
\end{ex}



\subsection{Identifying the deformed Lie algebra using the derived ideal}

Let us compute here the derived Lie algebra $[\Lh\times^\mu_t\Lg,\Lh\times^\mu_t\Lg]$ of the Lie algebra $\Lh\times^\mu_t\Lg$ associated to the crossed module $\mu:\Lh\to\Lg$.

\begin{prop}
The derived Lie algebra $[\Lh\times^\mu_t\Lg,\Lh\times^\mu_t\Lg]$ of the Lie algebra $\Lh\times^\mu_t\Lg$ associated to the crossed module $\mu:\Lh\to\Lg$ is
$$[\Lh\times^\mu_t\Lg,\Lh\times^\mu_t\Lg]={\mathcal O}\oplus[\Lg,\Lg],$$
where ${\mathcal O}$ is the subspace of $\Lh$ generated by the action of $\Lg$ on all elements of $\Lh$.
\end{prop}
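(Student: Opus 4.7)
The plan is to prove the asserted equality by showing the two inclusions separately, both of which reduce to direct computations using the explicit formula
\[[(h,g),(h',g')]_t=(g\cdot h'-g'\cdot h,[g,g']+t\mu([h,h']))\]
together with the crossed module axioms.

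For the inclusion $\mathcal{O}\oplus[\Lg,\Lg]\subseteq[\Lh\times^\mu_t\Lg,\Lh\times^\mu_t\Lg]$, I would produce explicit witnesses. Any generator $g\cdot h'$ of $\mathcal{O}$ (for $g\in\Lg$ and $h'\in\Lh$) appears as the first component of $[(0,g),(h',0)]_t=(g\cdot h',0)$, while any commutator $[g,g']\in[\Lg,\Lg]$ appears as the second component of $[(0,g),(0,g')]_t=(0,[g,g'])$. Since $\mathcal{O}\oplus\{0\}$ and $\{0\}\oplus[\Lg,\Lg]$ both lie in the derived ideal, so does their sum.

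For the reverse inclusion, it suffices to observe that every bracket $[(h,g),(h',g')]_t$ already lies in $\mathcal{O}\oplus[\Lg,\Lg]$. The first component $g\cdot h'-g'\cdot h$ is visibly in $\mathcal{O}$ by definition. For the second component, the term $[g,g']$ clearly lies in $[\Lg,\Lg]$, and the term $\mu([h,h'])$ can be rewritten using crossed module property (b) as $\mu(\mu(h)\cdot h')$ and then, by property (a), as $[\mu(h),\mu(h')]$, so it also lies in $[\Lg,\Lg]$. Hence $t\mu([h,h'])+[g,g']\in[\Lg,\Lg]$, completing the inclusion.

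There is no real obstacle here; the statement unfolds entirely from the definition of $[\cdot,\cdot]_t$ and the two crossed module identities, and the two-line computation $\mu([h,h'])=\mu(\mu(h)\cdot h')=[\mu(h),\mu(h')]$ is the only nontrivial ingredient.
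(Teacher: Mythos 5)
Your proof is correct and follows essentially the same approach as the paper: the paper's own proof exhibits exactly your two witnesses $[(0,g),(h',0)]_t=(g\cdot h',0)$ and $[(0,g),(0,g')]_t=(0,[g,g'])$ to get $\mathcal{O}\oplus[\Lg,\Lg]$ inside the derived ideal. In fact you go slightly further than the paper, which leaves the reverse inclusion implicit, whereas you verify it explicitly via the crossed-module identities $\mu([h,h'])=\mu(\mu(h)\cdot h')=[\mu(h),\mu(h')]\in[\Lg,\Lg]$; this is a welcome completion rather than a different method.
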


\begin{proof}
Indeed, all elements of the form $(0,x)$ for $x\in[\Lg,\Lg]$ are in $[\Lh\times^\mu_t\Lg,\Lh\times^\mu_t\Lg]$, because
$$[(0,g),(0,g')]=(0,[g,g']).$$
On the other hand, all elements of the form $(y,0)$ for $y\in{\mathcal O}$, the subspace of $\Lh$ generated by the action of $\Lg$ on all elements of $\Lh$, are in $[\Lh\times^\mu_t\Lg,\Lh\times^\mu_t\Lg]$, because
$$[(0,g),(h',0)]=(g\cdot h',0).$$
\end{proof}

We draw the readers attention to the fact that ${\mathcal O}\subset\Lh$ is not an ideal or even a subalgebra in general. But ${\mathcal O}\oplus[\Lg,\Lg]\subset \Lh\times^\mu_t\Lg$ is an ideal. One can deduce from the previous proposition that $\Lh\times^\mu_t\Lg$ is not isomorphic to the direct product $\Lh\oplus\Lg$ in case
${\mathcal O}$ is not isomorphic to $[\Lh,\Lh]$. But we will rather consider the derived series:

\begin{cor}  \label{cor_criterion_derived_ideal}
The Lie algebra $\Lh\times^\mu_t\Lg$ is solvable if and only if $\Lg$ is solvable.
\end{cor}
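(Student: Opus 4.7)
The plan is to prove both implications by induction on the length of the derived series, leveraging the formula $[\Lh\times^\mu_t\Lg,\Lh\times^\mu_t\Lg]=\mathcal{O}\oplus[\Lg,\Lg]$ from the preceding proposition. Let $L:=\Lh\times^\mu_t\Lg$ throughout.

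For the easy direction (``$L$ solvable implies $\Lg$ solvable''), I would observe that $[(0,g),(0,g')]_t=(0,[g,g'])$, so $\{0\}\oplus[\Lg,\Lg]\subseteq L^{(1)}$. A one-line induction then gives $\{0\}\oplus\Lg^{(n)}\subseteq L^{(n)}$ for every $n\geq 1$, from which $L^{(N)}=0$ forces $\Lg^{(N)}=0$.

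For the harder direction (``$\Lg$ solvable implies $L$ solvable''), the natural bound $L^{(n)}\subseteq\Lh\oplus\Lg^{(n)}$ is not stable under taking one more commutator, because the term $t\mu([h,h'])$ in the second component of the bracket escapes $\Lg^{(n)}$ if one only controls $g\in\Lg^{(n)}$. The remedy is to track $\mu$-images too: I would prove inductively that
\[L^{(n)}\subseteq\mu^{-1}(\Lg^{(n)})\oplus\Lg^{(n)}.\]
The base case $n=1$ follows from the previous proposition combined with $\mu(\mathcal{O})=\mu(\Lg\cdot\Lh)=[\Lg,\mu(\Lh)]\subseteq[\Lg,\Lg]$, using crossed module axiom (a). For the inductive step, pick $(h,g),(h',g')\in L^{(n)}$, so $g,g',\mu(h),\mu(h')\in\Lg^{(n)}$; then the second component of their bracket is $[g,g']+t[\mu(h),\mu(h')]\in\Lg^{(n+1)}$, and applying $\mu$ to the first component yields $[g,\mu(h')]-[g',\mu(h)]\in\Lg^{(n+1)}$, exactly what is needed. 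Once $\Lg^{(N)}=0$ we conclude $L^{(N)}\subseteq\ker(\mu)\oplus\{0\}$; one final bracket $[(k,0),(k',0)]_t=(0,t[\mu(k),\mu(k')])=0$ then gives $L^{(N+1)}=0$.

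The only genuinely delicate point is the choice of invariant $\mu^{-1}(\Lg^{(n)})\oplus\Lg^{(n)}$ for the inductive hypothesis, and the observation that crossed module axiom (a) is precisely what keeps it stable under iterated commutators; everything else is a routine bookkeeping on the derived series.
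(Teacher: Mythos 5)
Your proof is correct and follows essentially the same route as the paper: both directions are handled by tracking the derived series of $\Lh\times^\mu_t\Lg$ componentwise, with the second component shadowing the derived series of $\Lg$ and the solvability of $\Lg$ eventually killing the first component. Your invariant $L^{(n)}\subseteq\mu^{-1}(\Lg^{(n)})\oplus\Lg^{(n)}$ merely makes explicit, via the crossed-module axioms, the control of the terms $t\mu([h,h'])$ that the paper's terser argument leaves implicit.
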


\begin{proof}
Indeed, observe that iterating the previous proposition, we have that the second component of  $[[\Lh\times^\mu_t\Lg,\Lh\times^\mu_t\Lg],[\Lh\times^\mu_t\Lg,\Lh\times^\mu_t\Lg]]$ is $[[\Lg,\Lg],[\Lg,\Lg]]$
and so on. Therefore if $\Lh\times^\mu_t\Lg$ is solvable, the second component will be zero after a finite number of steps, i.e. $\Lg$ is solvable.
Conversely, in the series
$$(\Lh\times^\mu_t\Lg,[\Lh\times^\mu_t\Lg,\Lh\times^\mu_t\Lg],[[\Lh\times^\mu_t\Lg,\Lh\times^\mu_t\Lg],[\Lh\times^\mu_t\Lg,\Lh\times^\mu_t\Lg]],\ldots),$$
it is first $\Lg$, then $[\Lg,\Lg]$, then $[[\Lg,\Lg],[\Lg,\Lg]]$ etc which acts on the first component to obtain the orbit of the next step. Therefore, if $\Lg$ is solvable, eventually it is the zero space acting on the first component, i.e. the first component gives also zero in the next step. Therefore $\Lh\times^\mu_t\Lg$ is solvable.
\end{proof}

Observe that for the analogous statement with {\it nilpotent}, one needs (for the nilpotency of the deformed Lie algebra on top of $\Lg$ being nilpotent) that the action of $\Lg$ on $\Lh$ is nilpotent, because here one operates at each step with the whole Lie algebra $\Lg$.

\begin{ex}
Consider the crossed module $\ad:\Lg\to\Der(\Lg)$ where $\Lg$ is the $3$-dimensional solvable (not nilpotent) Lie algebra $\Lg=\Lr_{3,1}=\langle e_1,e_2,e_3\,|\,[e_1,e_2]=e_2,\,\,[e_1,e_3]=e_3\rangle$. The Lie algebra of derivations of $\Lg$ is $6$-dimensional and given by all matrices (written in the basis $(e_1,e_2,e_3)$) of the form
$$\left(\begin{array}{ccc} 0 & 0 & 0 \\ * & * & * \\ * & * & * \end{array}\right).$$
So $\Der(\Lg)$ is generated by the elementary matrices $(E_{21},E_{22},E_{23},E_{31},E_{32},E_{33})$. The deformed Lie algebra $\Lg\times^\ad_t\Der(\Lg)$ has the generators $(X_1=(e_1,0), X_2=(e_2,0),X_3=(e_3,0),X_4=(0,E_{21}),X_5=(0,E_{22}),X_6=(0,E_{23}),X_7=(0,E_{31}),X_8=(0,E_{32}),X_9=(0,E_{33}))$. The commutation relations between these generators are:
\begin{itemize}
\item[] $[X_1,X_2]=-X_4$, $[X_1,X_3]=-X_7$,
\item[] $[X_2,X_5]=-X_2$, $[X_2,X_8]=-X_3$,
\item[] $[X_3,X_6]=-X_2$, $[X_3,X_9]=-X_3$
\item[] $[X_4,X_5]=-X_4$, $[X_4,X_8]=-X_7$,
\item[] $[X_5,X_6]=X_6$, $[X_5,X_8]=-X_8$,
\item[] $[X_6,X_7]=X_4$, $[X_6,X_8]=X_5-X_9$, $[X_6,X_9]=X_6$,
\item[] $[X_7,X_9]=-X_7$,
\item[] $[X_8,X_9]=-X_8$.
\end{itemize}
One can read off from these commutation relations the successive derived ideals of the deformed Lie algebra $\Lg\times^\ad_t\Der(\Lg)$. We obtain
$$(\Lg\times^\ad_t\Der(\Lg))'=\langle X_2,X_3,X_4,X_5-X_9, X_6,X_7,X_8\rangle$$
and
$$(\Lg\times^\ad_t\Der(\Lg))''=\langle X_2,X_3,X_4,X_5-X_9, X_6,X_7,X_8\rangle.$$
We see that the derived algebras stabilize and this Lie algebra is not solvable. On the other hand, we can compute the derived Lie algebras for the derivation Lie algebra $\Der(\Lr_{3,1})$. It is generated by $X_4,\ldots,X_9$ with the same commutation relations as above. We obtain
$$(\Der(\Lr_{3,1}))'= \langle X_4,X_5-X_9,X_6,X_7,X_8\rangle$$
and
$$(\Der(\Lr_{3,1}))''= \langle X_4,X_5-X_9,X_6,X_7,X_8\rangle.$$
So this derived series stabilizes also and $\Der(\Lr_{3,1})$ is not solvable either. But now we can conclude that the deformed Lie algebra and the direct product Lie algebra are not isomorphic. Indeed, the second derived algebra of the direct product is $5$-dimensional, because $\Lr_{3,1}$ is $2$-step solvable.
\end{ex}

\subsection{Computing the derivations of the deformed Lie algebra}

Let $\mu:\Lh\to\Lg$ be a crossed module of Lie algebras and consider again the deformed Lie algebra $\Lh\times^\mu_t\Lg$. We are interested here in its Lie algebra of derivations. But first, we record the results for the derivations of the direct product Lie algebra and the semi-direct product Lie algebra in the case where $\Lg\not=\Lh$, cf Section \ref{ss:lie_der} (where $\Lg=\Lh$).

\begin{prop}
Every derivation $D:\Lh\oplus\Lg\to\Lh\oplus\Lg$ (of the direct product Lie algebra $\Lh\oplus\Lg$) is of the
matrix form
$$D=\left(\begin{array}{cc} D_1 & D_2 \\ D_3 & D_4 \end{array}\right),$$
where $D_1$ is a derivation of $\Lh$, $D_4$ is a derivation of $\Lg$ and where $D_2\in\Hom(\Lg/[\Lg,\Lg],Z(\Lh))$, while $D_3\in\Hom(\Lh/[\Lh,\Lh],Z(\Lg))$.
\end{prop}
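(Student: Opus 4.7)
The plan is to mimic the proof of Proposition \ref{prop_direct_prod} almost verbatim, with the only change being bookkeeping: the two summands are now distinct Lie algebras $\Lh$ and $\Lg$, so the four blocks of $D$ have genuinely different source and target spaces. I would write $D$ in block form
$$D=\left(\begin{array}{cc} D_1 & D_2 \\ D_3 & D_4 \end{array}\right),$$
where $D_1:\Lh\to\Lh$, $D_2:\Lg\to\Lh$, $D_3:\Lh\to\Lg$, $D_4:\Lg\to\Lg$, and expand both sides of the derivation identity $D[(h,g),(h',g')]=[D(h,g),(h',g')]+[(h,g),D(h',g')]$ using the direct product bracket $[(h,g),(h',g')]=([h,h'],[g,g'])$. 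This yields two identities, one in $\Lh$ and one in $\Lg$, each a sum of four bracket terms.

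Next I would specialize these two identities to three simple choices of arguments in order to decouple the conditions on the four blocks. Setting $g=g'=0$ would give $D_1[h,h']=[D_1(h),h']+[h,D_1(h')]$ in the $\Lh$-component, i.e.\ $D_1\in\Der(\Lh)$, together with $D_3[h,h']=0$ in the $\Lg$-component; symmetrically, setting $h=h'=0$ would give $D_4\in\Der(\Lg)$ and $D_2[g,g']=0$. Then setting $g=h'=0$ would produce the mixed relations $0=[h,D_2(g')]$ for all $h\in\Lh$ and $0=[D_3(h),g']$ for all $g'\in\Lg$, forcing $\im(D_2)\subset Z(\Lh)$ and $\im(D_3)\subset Z(\Lg)$. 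Combined with the vanishing on brackets, this gives precisely $D_2\in\Hom(\Lg/[\Lg,\Lg],Z(\Lh))$ and $D_3\in\Hom(\Lh/[\Lh,\Lh],Z(\Lg))$.

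Conversely, I would observe that any $D$ of the stated block form is a derivation: the diagonal blocks contribute by virtue of being derivations of the respective factors, and the off-diagonal contributions all vanish because the direct product bracket has no cross terms and the images of $D_2,D_3$ are central in $\Lh$, $\Lg$ respectively. This gives both inclusions and completes the characterization.

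The main obstacle is essentially non-existent: this is a routine modification of the already-proved Proposition \ref{prop_direct_prod}, and the only substantive check is that the specializations $g=g'=0$, $h=h'=0$, $g=h'=0$ exhaust the constraints imposed by the derivation identity, which follows because the bracket on $\Lh\oplus\Lg$ is bilinear and splits over the direct sum decomposition of the arguments.
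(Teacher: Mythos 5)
Your proposal is correct and follows exactly the route the paper takes: the paper's proof of this proposition simply says it is very similar to that of Proposition \ref{prop_direct_prod}, namely expanding the derivation identity in block form and specializing to $g=g'=0$, $h=h'=0$ and $g=h'=0$ to extract the conditions on $D_1,D_2,D_3,D_4$, which is what you do (your added converse check is a harmless bonus).
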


\begin{proof}
The proof is very similar to the proof of Proposition \ref{prop_direct_prod}.
\end{proof}

Let us now suppose that $\Lh$ and $\Lg$ are two Lie algebras with an action of $\Lg$ on $\Lh$ by derivations.

\begin{prop}
Every derivation $D:\Lh\rtimes\Lg\to\Lh\rtimes\Lg$ (of the semi-direct product Lie algebra $\Lh\rtimes\Lg$) is of the
matrix form
$$D=\left(\begin{array}{cc} D_1 & D_2 \\ D_3 & D_4 \end{array}\right),$$
where $D_4$ is a derivation of $\Lg$, $D_2$ is a derivation of $\Lg$ with values in $\Lh$ and where $D_1$, $D_2$, $D_3$ and $D_4$ satisfy the equations:
\begin{itemize}
\item[(1)] $0=D_3(h)\cdot h'-D_3(h')\cdot h$,
\item[(2)] $D_1(g'\cdot h)=D_4(g')\cdot h+g'\cdot D_1(h)$,
\item[(3)] $D_3(g'\cdot h)=[g',D_3(h)]$, i.e. $D_3$ is in the centroid (of $\Lh$ with values in $\Lg$),
\end{itemize}
\end{prop}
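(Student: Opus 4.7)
The strategy mirrors the proof of Proposition \ref{prop_semi_direct_prod} almost verbatim, with the sole modification that the diagonal and off-diagonal blocks of $D$ now connect two (possibly distinct) Lie algebras $\Lh$ and $\Lg$. First I would write an arbitrary linear endomorphism $D$ of $\Lh\oplus\Lg$ in block form with $D_1:\Lh\to\Lh$, $D_2:\Lg\to\Lh$, $D_3:\Lh\to\Lg$ and $D_4:\Lg\to\Lg$, and expand both sides of the derivation identity
$$D[(h,g),(h',g')]=[D(h,g),(h',g')]+[(h,g),D(h',g')]$$
using the semi-direct product bracket $[(h,g),(h',g')]=(g\cdot h'-g'\cdot h,[g,g'])$. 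Because $\Lh$ is considered as an abelian Lie algebra inside the semi-direct product, several of the $[\Lh,\Lh]$-terms that appeared in the proof of Proposition \ref{prop_semi_direct_prod} disappear, leaving a cleaner system than in the special case $\Lh=\Lg$.

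The next step is to separate the resulting identity into its $\Lh$-component and its $\Lg$-component, and then extract independent equations by specializing the arguments to pure $\Lh$-elements or pure $\Lg$-elements. Setting $g=g'=0$ isolates condition $(1)$: the $\Lh$-component gives $D_3(h)\cdot h'=D_3(h')\cdot h$, while the $\Lg$-component vanishes identically (since $\Lh$ is abelian, there is no obstruction on $D_1$ here). Setting $g=h'=0$, letting $g'$ and $h$ run freely, produces condition $(2)$ from the $\Lh$-component and condition $(3)$ from the $\Lg$-component. Finally, setting $h=h'=0$ yields the two claims stated in the preamble: the $\Lh$-component delivers
$D_2([g,g'])=g\cdot D_2(g')-g'\cdot D_2(g)$, i.e.\ $D_2$ is a derivation of $\Lg$ with values in the $\Lg$-module $\Lh$, and the $\Lg$-component delivers $D_4\in\Der(\Lg)$.

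There is no substantial obstacle: the entire content of the statement is the correct bookkeeping of the four blocks of $D$ against the asymmetric semi-direct product bracket. Once this is in place, one may further exploit the combination of $(1)$ and $(3)$, exactly as in Proposition \ref{prop_semi_direct_prod}, to deduce that $D_3$ vanishes on the subspace spanned by the action orbit $\Lg\cdot\Lh$ modulo its consequences, and that its image is annihilated by $\ad_\Lg$, which refines the description of the derivation algebra in examples. I would present the proof by simply listing the specializations above and referring to the computation of Proposition \ref{prop_semi_direct_prod} for the pattern.
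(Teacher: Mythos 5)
Your proposal is correct and follows essentially the same route as the paper: write $D$ in block form, expand the derivation identity against the bracket $(g\cdot h'-g'\cdot h,[g,g'])$, split into the $\Lh$- and $\Lg$-components, and specialize $g=g'=0$, $g=h'=0$, $h=h'=0$ to obtain exactly conditions (1)--(3) together with $D_2\in\Der(\Lg,\Lh)$ and $D_4\in\Der(\Lg)$. (Your side remark that ``several $[\Lh,\Lh]$-terms disappear'' compared to the case $\Lh=\Lg$ is not quite apt, since there the module part is likewise abelian, but this does not affect the argument.)
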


\begin{proof}
We compute both sides of $D[(h,g),(h',g')]=[D(h,g),(h',g')]+[(h,g),D(h',g')]$ for all $h,h'\in\Lh$ and all $g,g'\in\Lg$.
\begin{eqnarray*}
D[(h,g),(h',g')]&=&D(g\cdot h'-g'\cdot h,[g,g'])\\
&=&(D_1(g\cdot h'-g'\cdot h)+D_2[g,g'],D_3(g\cdot h'-g'\cdot h)+D_4[g,g']).
\end{eqnarray*}
On the other hand
\begin{eqnarray*}
[D(h,g),(h',g')]&=&[(D_1(h)+D_2(g),D_3(h)+D_4(g)),(h',g')] \\
&=&((D_3(h)+D_4(g))\cdot h'-g'\cdot(D_1(h)+D_2(g)),[D_3(h)+D_4(g),g']).
\end{eqnarray*}
and the last term reads
\begin{eqnarray*}
[(h,g),D(h',g')]&=&[(h,g),(D_1(h')+D_2(g'),D_3(h')+D_4(g'))] \\
&=&(g\cdot(D_1(h')+D_2(g'))-(D_3(h')+D_4(g'))\cdot h,[g,D_3(h')+D_4(g')]).
\end{eqnarray*}
Again we obtain two relations for $D$ to be a derivation:
\begin{itemize}
\item $D_1(g\cdot h'-g'\cdot h)+D_2[g,g']=(D_3(h)+D_4(g))\cdot h'-g'\cdot(D_1(h)+D_2(g))+g\cdot(D_1(h')+D_2(g'))-(D_3(h')+D_4(g'))\cdot h$, and
\item $D_3(g\cdot h'-g'\cdot h)+D_4[g,g']=[D_3(h)+D_4(g),g']+[g,D_3(h')+D_4(g')]$.
\end{itemize}
From these two relations, we obtain the following five conditions on $D_1$, $D_2$, $D_3$ and $D_4$ by putting first $g=g'=0$, then $g=h'=0$ and finally $h=h'=0$:
\begin{itemize}
\item[(1)] $0=D_3(h)\cdot h'-D_3(h')\cdot h$,
\item[(2)] $D_1(g'\cdot h)=D_4(g')\cdot h+g'\cdot D_1(h)$,
\item[(3)] $D_3(g'\cdot h)=[g',D_3(h)]$, i.e. $D_3$ is in the centroid (of $\Lh$ with values in $\Lg$),
\item[(4)] $D_2[g,g']=-g'\cdot D_2(g)+g\cdot D_2(g')$, i.e. $D_2\in\Der(\Lg,\Lh)$, and
\item[(5)] $D_4[g,g']=[D_4(g),g']+[g,D_4(g')]$, i.e. $D_4\in\Der(\Lg)$.
\end{itemize}
\end{proof}

\begin{prop}
Every derivation $D:\Lh\times^\mu_1\Lg\to\Lh\times^\mu_1\Lg$ (of the deformed Lie algebra $\Lh\times^\mu_1\Lg$ at $t=1$) is of the
matrix form
$$D=\left(\begin{array}{cc} D_1 & D_2 \\ D_3 & D_4 \end{array}\right),$$
where $D_1$, $D_2$, $D_3$ and $D_4$ satisfy the six equations:
\begin{itemize}
\item[(1)]  $D_2(\mu([h,h']))=D_3(h)\cdot h'-D_3(h')\cdot h$
\item[(2)] $D_4(\mu([h,h']))=\mu[D_1(h),h']+\mu[h,D_1(h')]$
\item[(3)] $D_1(g'\cdot h)=g'\cdot D_1(h)+D_4(g')\cdot h$
\item[(4)] $D_3(g'\cdot h)=[g',D_3(h)]+\mu[D_2(g'),h]$
\item[(5)] $D_2[g,g']=-g'\cdot D_2(g)+g\cdot D_2(g')$, i.e. $D_2\in\Der(\Lg,\Lh)$
\item[(6)] $D_4[g,g']=[D_4(g),g']+[g,D_4(g')]$, i.e. $D_4\in\Der(\Lg)$.
\end{itemize}

\end{prop}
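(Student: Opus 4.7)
The plan is to mirror the method used for the analogous propositions on the direct product and the semi-direct product earlier in this section. I would write a candidate derivation in block matrix form
$$D = \left(\begin{array}{cc} D_1 & D_2 \\ D_3 & D_4 \end{array}\right): \Lh \oplus \Lg \to \Lh \oplus \Lg,$$
meaning $D(h,g) = (D_1(h) + D_2(g),\, D_3(h) + D_4(g))$, and then expand each of the three terms in the Leibniz identity
$$D[(h,g),(h',g')]_1 = [D(h,g),(h',g')]_1 + [(h,g),D(h',g')]_1$$
using the deformed bracket
$$[(h,g),(h',g')]_1 = (g\cdot h' - g'\cdot h,\, [g,g'] + \mu([h,h'])).$$
Separating the resulting identity into its $\Lh$-component and its $\Lg$-component yields two bilinear identities in $h,h',g,g'$ that $D_1, D_2, D_3, D_4$ must simultaneously satisfy.

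Next I would extract the six stated equations by specializing along three coordinate subspaces of $\Lh \oplus \Lg$. Setting $g = g' = 0$ kills all action terms and all $\Lg$-bracket terms, leaving only $\mu([h,h'])$ in the second slot of the bracket: the $\Lh$-component of the Leibniz identity then collapses to equation (1) and the $\Lg$-component to the new equation (2). Setting $g = h' = 0$ reduces the bracket to $(-g'\cdot h,\, 0)$, and its two components yield equations (3) and (4) respectively. Finally, $h = h' = 0$ reduces everything to a purely $\Lg$-level identity, giving equations (5) (that is, $D_2 \in \Der(\Lg,\Lh)$) and (6) (that is, $D_4 \in \Der(\Lg)$). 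A short converse argument then confirms that if all six equations hold, summing the contributions by bilinearity recovers the full Leibniz identity on $\Lh \times^\mu_1 \Lg$, so the six conditions are both necessary and sufficient.

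The main bookkeeping obstacle compared with the semi-direct product case treated earlier is the reappearance of the $\mu([h,h'])$ term on both sides of the Leibniz identity. On the left, this term is processed by $D_2$ and $D_4$, producing the non-trivial equations (1) and (2); on the right, the analogous contribution $\mu[D_2(g), h']$ coming from the second slot of $[D(h,g),(h',g')]_1$ leaks into the $\Lg$-component of the action term and produces the new summand $\mu[D_2(g'),h]$ in equation (4). Keeping careful track of signs is the only real subtlety; I do not anticipate a genuine structural difficulty. Unlike in the situation $\Lh = \Lg$ of Section \ref{ss:lie_der}, however, none of the six equations decouples the four block-entries in a clean way, which is consistent with the remark at the start of Section 4.4 that the derivation algebra of the deformed Lie algebra does not admit as pleasant a description as in the case $\Lg \rtimes \Lg$.
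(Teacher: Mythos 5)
Your proposal is correct and follows essentially the same route as the paper: expand the Leibniz identity for the deformed bracket with $D$ in block form, split into $\Lh$- and $\Lg$-components, and specialize at $g=g'=0$, $g=h'=0$ and $h=h'=0$ to obtain exactly equations (1)--(6). The only minor imprecision is attributional (in the specialization $g=h'=0$ the summand $\mu[D_2(g'),h]$ in (4) arises from the second slot of $[(h,g),D(h',g')]_1$ rather than of $[D(h,g),(h',g')]_1$), which does not affect the argument.
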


\begin{proof}
We compute both sides of $D[(h,g),(h',g')]=[D(h,g),(h',g')]+[(h,g),D(h',g')]$ for all $h,h'\in\Lh$ and all $g,g'\in\Lg$.
\begin{eqnarray*}
D[(h,g),(h',g')]&=&D(g\cdot h'-g'\cdot h,[g,g']+\mu([h,h']))\\
&=&(D_1(g\cdot h'-g'\cdot h)+D_2([g,g']+\mu([h,h'])),D_3(g\cdot h'-g'\cdot h)+D_4([g,g']+\mu([h,h']))).
\end{eqnarray*}
On the other hand
\begin{eqnarray*}
[D(h,g),(h',g')]&=&[(D_1(h)+D_2(g),D_3(h)+D_4(g)),(h',g')] \\
&=&((D_3(h)+D_4(g))\cdot h'-g'\cdot(D_1(h)+D_2(g)),\\
&&[D_3(h)+D_4(g),g']+\mu[D_1(h)+D_2(g),h']).
\end{eqnarray*}
and the last term reads
\begin{eqnarray*}
[(h,g),D(h',g')]&=&[(h,g),(D_1(h')+D_2(g'),D_3(h')+D_4(g'))] \\
&=&(g\cdot (D_1(h')+D_2(g'))-(D_3(h')+D_4(g'))\cdot h,\\
&&[g,D_3(h')+D_4(g')]+\mu[h,D_1(h')+D_2(g')]).
\end{eqnarray*}
Again we obtain two relations for $D$ to be a derivation:
\begin{itemize}
\item $D_1(g\cdot h'-g'\cdot h)+D_2([g,g']+\mu([h,h']))=(D_3(h)+D_4(g))\cdot h'-g'\cdot(D_1(h)+D_2(g))+g\cdot (D_1(h')+D_2(g'))-(D_3(h')+D_4(g'))\cdot h$, and
\item $D_3(g\cdot h'-g'\cdot h)+D_4([g,g']+\mu([h,h']))=[D_3(h)+D_4(g),g']+\mu[D_1(h)+D_2(g),h']+[g,D_3(h')+D_4(g')]+\mu[h,D_1(h')+D_2(g')]$.
\end{itemize}
From these two relations, we obtain the following six conditions on $D_1$, $D_2$, $D_3$ and $D_4$ by putting first $g=g'=0$, then $g=h'=0$ and finally $h=h'=0$:
\begin{itemize}
\item[(1)]  $D_2(\mu([h,h']))=D_3(h)\cdot h'-D_3(h')\cdot h$
\item[(2)] $D_4(\mu([h,h']))=\mu[D_1(h),h']+\mu[h,D_1(h')]$
\item[(3)] $D_1(g'\cdot h)=g'\cdot D_1(h)+D_4(g')\cdot h$
\item[(4)] $D_3(g'\cdot h)=[g',D_3(h)]+\mu[D_2(g'),h]$
\item[(5)] $D_2[g,g']=-g'\cdot D_2(g)+g\cdot D_2(g')$, i.e. $D_2\in\Der(\Lg,\Lh)$
\item[(6)] $D_4[g,g']=[D_4(g),g']+[g,D_4(g')]$, i.e. $D_4\in\Der(\Lg)$.
\end{itemize}
\end{proof}

\begin{ex}
For the crossed module $0:V\to\Lg$ associated to a $\Lg$-module $V$, we have that the derivations of
$V\times^0_1\Lg$ and of $V\rtimes\Lg$ coincide, but are in general different from those of $V\oplus\Lg$.
\end{ex}

\begin{ex}
Let $\Lg= \langle x,y\,|\,[x,y]=x\rangle$ be the two-dimensional non-abelian complex Lie algebra. Consider the crossed module $i:\C x\to\Lg$ given by the inclusion of the ideal generated by $x$ into $\Lg$, and the corresponding Lie algebra $\C x\rtimes^i_1\Lg$.  One computes that the above conditions (1), (2) and (5) are trivially true for
\begin{itemize}
\item $D_1:\C x\to\C x$, $D_1(x)=\lambda x$,
\item $D_2:\Lg\to\C x$, $D_2(x)=\alpha x$, $D_2(y)=\beta x$,
\item $D_3:\C x\to\Lg$, $D_3(x)=\mu x+\nu y$,
\item $D_4:\Lg\to\Lg$, $D_4(x)=ax+cy$, $D_4(y)=bx+dy$,
\end{itemize}
for some scalars $\alpha,\beta,\lambda,\mu,\nu,a,b,c,d$. Condition (3) implies that $D_4(y)$ has no component
in $y$, i.e. $d=0$. Condition (4) implies that $\nu=0$. Condition (6) (i.e. the fact that $D_4$ is a derivation) implies that $c=0$. Thus the Lie algebra of derivations of $\C x\rtimes^i_1\Lg$ is of dimension $6$ (described by the six scalars $\alpha,\beta,\lambda,\mu,a,b$). The Lie algebra of derivations of the semi-direct product $\C x\rtimes\Lg$ is also of dimension $6$. But the Lie algebra of derivations of the direct product is of dimension $4$.
\end{ex}

\begin{ex}
Let $\Lg= \langle e_1, e_2, e_3, e_4\,|\, [e_1, e_3]= e_3, [e_2, e_4]= e_4 \rangle$ and $\Lh= \langle e_1, e_3, e_4\,|\, [e_1, e_3]= e_3\rangle$ is a Lie subalgebra of 
$\Lg$. Consider the crossed module $i:\Lh \to \Lg$ given by the inclusion. Then $\dim (\Der(\Lh \rtimes^i_1 \Lg))= 10 < 11= \dim (\Der(\Lh \rtimes \Lg))$.
\end{ex}

\end{document}